\newtheorem{theorem}{Theorem}[section]
\newtheorem{definition}[theorem]{Definition}
\newtheorem{lemma}[theorem]{Lemma}
\newtheorem{corollary}[theorem]{Corollary}
\newtheorem{example}[theorem]{Example}
\newtheorem{construction}[theorem]{Construction}
\newtheorem{remark}[theorem]{Remark}
\def\whitebox{{\hbox{\hskip 1pt
 \vrule height 6pt depth 1.5pt
 \lower 1.5pt\vbox to 7.5pt{\hrule width
    3.2pt\vfill\hrule width 3.2pt}%
 \vrule height 6pt depth 1.5pt
 \hskip 1pt } }}
\def\qed{\ifhmode\allowbreak\else\nobreak\fi\hfill\quad\nobreak
     \whitebox\medbreak}
\newcommand{\ignore}[1]{}
\begin{document}

\title{Some results on generalized strong external difference
families}

\author{\small  X. Lu, X. Niu and H. Cao \thanks{Research
supported by the National Natural Science Foundation of China
under Grant 11571179, and the Priority Academic
Program Development of Jiangsu Higher Education Institutions.
E-mail: {\sf caohaitao@njnu.edu.cn}} \\
\small Institute of Mathematics, \\ \small  Nanjing Normal
University, Nanjing 210023, China}

\date{}
\maketitle
\begin{abstract}

A generalized strong external difference family (briefly $(v, m; k_1,\dots,k_m; \lambda_1,\dots,\lambda_m)$-GSEDF) was introduced by Paterson and Stinson in 2016. In this paper, we construct some new GSEDFs for $m=2$ and use them to obtain some results on graph decomposition. We also give some nonexistence results for GSEDFs. Especially, we prove that a $(v, 3;k_1,k_2,k_3; \lambda_1,\lambda_2,\lambda_3)$-GSEDF does not exist when $k_1+k_2+k_3< v$.

\bigskip

\noindent {\textbf{Key words:}} generalized strong external difference family; difference set; character theory; graph decomposition; nonexistence

\end{abstract}

\section{Introduction}
 Let $G$ be an abelian group  of order $v$. For any two disjoint sets $D_1$, $D_2\subseteq G$, define $\Delta(D_1, D_2)=\{x-y: x\in D_1, y\in D_2\}$. Let $D$ be a $k$-subset of $G$, and let $\lambda$ and $\mu$ be  positive integers.
Then $D$ is called a $(v,k,\lambda)$-{\it difference set} (briefly $(v,k,\lambda)$-DS) in $G$ if $\Delta(D,D)=k\{0\}+\lambda(G\setminus\{0\})$,
and $D$ is called a $(v,k,\lambda,\mu)$-{\it partial difference set} (briefly $(v,k,\lambda,\mu)$-PDS) in $G$ if
$\Delta(D,D)=k\{0\}+\lambda (D\setminus\{0\})+\mu(G\setminus (D\cup\{0\}))$.
\begin{definition}
  Let $G$ be an abelian group of order $v$. Let $\lambda_1,\lambda_2,\dots,\lambda_m$ be positive integers and let $D_1,D_2,\dots,D_m$ be mutually disjoint subsets of $G$ such that $|D_i|=k_i$, $1\leq i\leq m$.  Then $\{D_1, D_2,\dots,D_m\}$ is called a  $(v,m;k_1,\dots,k_m;\lambda_1,\dots,\lambda_m)$-generalized strong external difference family (briefly $(v,m;k_1,\dots,k_m;\lambda_1,\dots,\lambda_m)$-GSEDF) in $G$ if the following multiset equation holds for each $i$, $1\leq i\leq m$,
\begin{equation*}
  \bigcup\limits_{1\leq j\leq m\atop j\neq i}\Delta(D_i,D_j)=\lambda_i(G\setminus \{0\}).
\end{equation*}
\end{definition}

It is easy to see that the parameters of a $(v,m;k_1,\dots,k_m;\lambda_1,\dots,\lambda_m)$-GSEDF satisfy the following counting relation for each $i$, $1\leq i\leq m$,
\begin{equation}\label{def}
\sum_{1\leq j\leq m\atop j\neq i}k_ik_j=\lambda_i (v-1).
\end{equation} A $(v,m;k_1,\dots,k_m;\lambda_1,\dots,\lambda_m)$-GSEDF is said to be a $(v,m,k,\lambda)$-SEDF when $k_1=\dots=k_m=k$ and $\lambda_1=\dots=\lambda_m=\lambda$.

Algebraic manipulation detection codes (briefly AMD codes) have many applications \cite{CDFPW, CFP, CPX} and GSEDFs can be used to obtain $R$-optimal strong AMD codes \cite{PS}. Therefore, it is important and interesting to determine whether or not there exist GSEDFs. Paterson and Stinson \cite{PS} prove that there  exists a $(v, m, k, 1)$-SEDF if and only if $m=2$ and $v=k^2+1$, or $m=v$ and $k=1$. Huczynska and Paterson \cite{HP} show that for a $(v,m, k, \lambda)$-SEDF, either  $k=1$ and $\lambda=1$, or
 $k>1$ and $\lambda<k$. They also show that a $(v,m, k, 2)$-SEDF can exist only  $m=2$, and
a $(v,2, p, \lambda)$-SEDF, where $p$ is prime, can exist only  $\lambda=1$. Wen, Yang, Fu and Feng \cite{WYFF} present some general constructions of GSEDF by using difference sets and partial difference sets.  There are some $(v,2,k,\lambda)$-SEDFs obtained from cyclotomic constructions, see \cite{BJWZ,HP,PS}.  Wen, Yang and Feng \cite{WYF}, and Jedwab and Li \cite{JL} respectively give an example of $(243,11,22,20)$-SEDF in two different ways which is the first nontrivial example for $m\geq5$.

Martin and Stinson \cite{MS}, and Jedwab and Li \cite{JL} use different methods to prove that if $k>1$, then there do not exist $(v,3,k,\lambda)$-SEDFs and $(v,4,k,\lambda)$-SEDFs in any finite abelian group. Further, Jedwab and Li \cite{JL} gave some upper bounds for a $(v,m,k,\lambda)$-SEDF, and used them to get some nonexistence  results for the case $m=2$. For more nonexistence results on $(v,m,k,\lambda)$-SEDFs, see \cite{BJWZ,HP,JL,MS}.

In this paper, we shall focus on the constructions and nonexistence of GSEDFs. In the next section, we will give some necessary definitions and notations, and some properties of GSEDFs. In Section 3, we give some nonexistence results for GSEDFs. Especially, we prove that a $(v, m;k_1,k_2,k_3; \lambda_1,\lambda_2,\lambda_3)$-GSEDF does not exist when $k_1+k_2+k_3< v$. In Section 4, we will construct some new GSEDFs for $m=2$ and use them to obtain some results on graph decomposition.  Finally, Section 5 gives some remarks and concludes this paper.

\section{Preliminaries}

The following theorem gives the relationship between difference sets and GSEDFs with the property that $\{D_1, D_2, \dots, D_m\}$ is  a partition of  $G$.

\begin{theorem}{\rm (\cite{PS})}\label{DS}
Suppose $\{D_1, D_2, \dots, D_m\}$ is a partition of an abelian group $G$ of order $v$, where $|D_i|=k_i$ for $1\leq i\leq m$. Then $\{D_1, D_2, \dots, D_m\}$ is a $(v,m;k_1,\dots,k_m;\lambda_1,\dots,\lambda_m)$-GSEDF if and only if each $D_i$ is a $(v,k_i,k_i-\lambda_i)$-DS in $G$, $1\leq i\leq m$.
\end{theorem}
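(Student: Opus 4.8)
The plan is to recast the defining multiset identity of a GSEDF as an equation in the integral group ring $\Z[G]$ and then exploit the partition hypothesis. I identify any multiset $M$ of elements of $G$ with the element $\sum_{g\in G}m_g\,g\in\Z[G]$, where $m_g$ is the multiplicity of $g$ in $M$; for $D\subseteq G$ I write $D^{(-1)}=\sum_{d\in D}(-d)$, and I abbreviate $\sum_{g\in G}g$ by $G$ and the identity of the group by $0$. With these conventions the multiset $\Delta(D_i,D_j)$ is exactly the product $D_iD_j^{(-1)}$, so the GSEDF condition at index $i$ becomes
\begin{equation*}
D_i\Big(\sum_{j\neq i}D_j^{(-1)}\Big)=\lambda_i(G-0).
\end{equation*}

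First I would feed in the partition hypothesis. Since $\{D_1,\dots,D_m\}$ partitions $G$ we have $\sum_j D_j=G$, hence $\sum_{j\neq i}D_j=G-D_i$; applying the negation map $g\mapsto -g$, which fixes $G$ because $G$ is closed under negation, yields $\sum_{j\neq i}D_j^{(-1)}=G-D_i^{(-1)}$. Substituting this and using the standard identity $D_iG=k_iG$ (each group element is produced $|D_i|=k_i$ times), the condition at index $i$ reduces to
\begin{equation*}
k_iG-D_iD_i^{(-1)}=\lambda_iG-\lambda_i\,0,
\end{equation*}
which rearranges to $D_iD_i^{(-1)}=(k_i-\lambda_i)G+\lambda_i\,0$.

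Then I would read off the conclusion. The left side $D_iD_i^{(-1)}$ is precisely $\Delta(D_i,D_i)$, while the right side has coefficient $(k_i-\lambda_i)+\lambda_i=k_i$ at $0$ and coefficient $k_i-\lambda_i$ at every nonidentity element. Hence the condition at index $i$ is equivalent to $\Delta(D_i,D_i)=k_i\{0\}+(k_i-\lambda_i)(G\setminus\{0\})$, i.e. to $D_i$ being a $(v,k_i,k_i-\lambda_i)$-DS. Running this for every $i$ gives the theorem.

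I do not expect a genuine obstacle; the content is bookkeeping in $\Z[G]$. The one point that needs care is the logical direction: the partition hypothesis must be used symmetrically, so I want each displayed step to be an honest equality of group-ring elements (equivalently, of multisets) rather than a one-way implication, which is what delivers both the ``only if'' and the ``if'' directions simultaneously. A secondary point worth a line is that the parameter $k_i-\lambda_i$ is nonnegative and integral, which one verifies from the counting relation (\ref{def}): with $\sum_{j\neq i}k_j=v-k_i$ it gives $k_i-\lambda_i=k_i(k_i-1)/(v-1)\ge 0$, consistent with a legitimate difference-set parameter.
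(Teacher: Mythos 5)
Your proof is correct. The paper itself gives no proof of this theorem (it is quoted from \cite{PS}), so there is no internal argument to compare against; but your group-ring computation is exactly the standard one, and in fact it is the specialization to $D=G$ of the identity the paper records later as equation (\ref{j}): when $\{D_1,\dots,D_m\}$ partitions $G$ one has $D_jD^{-1}=k_jG$, so the GSEDF condition $D_jD^{-1}-D_jD_j^{-1}=\lambda_j(G\setminus\{1\})$ rearranges, through equalities only, to $D_jD_j^{-1}=k_j\cdot 1+(k_j-\lambda_j)(G\setminus\{1\})$, which is precisely the $(v,k_j,k_j-\lambda_j)$-DS condition, giving both directions at once just as you describe. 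Your closing remark that the partition hypothesis forces $k_i-\lambda_i=k_i(k_i-1)/(v-1)\ge 0$ is also correct and is the right sanity check on the difference-set parameters.
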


In this paper, we shall focus on  the GSEDFs with the property that $\{D_1, D_2, \dots, D_m\}$ is not a partition of  $G$. The following theorem gives another  necessary and sufficient conditions for  the existence of a  GSEDF with the property that $\{D_1, D_2, \dots, D_m\}$ is  a partition of $G\setminus\{0\}$.

\begin{theorem}{\rm (\cite{HP})}\label{PDS}
Let $G$ be an abelian group of order $v$. Suppose $\{D_1, D_2, \dots, D_m\}$ is a partition of  $G\setminus\{0\}$, where $|D_i|=k_i$ for $1\leq i\leq m$. Then $\{D_1, D_2, \dots, D_m\}$ is a $(v,m;k_1,\dots,k_m;\lambda_1,\dots,\lambda_m)$-GSEDF if and only if each $D_i$ is a $(v,k_i,k_i-\lambda_i-1,k_i-\lambda_i)$-PDS in $G$,  $1\leq i\leq m$.
\end{theorem}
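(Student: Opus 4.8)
The plan is to translate everything into the integer group ring $\mathbb{Z}[G]$ and collapse the GSEDF condition into a single group-ring identity for $\Delta(D_i,D_i)$. First I would identify each subset $A\subseteq G$ with the element $\sum_{a\in A}a\in\mathbb{Z}[G]$ (abusing notation by writing $A$ for this sum), set $A^{(-1)}=\sum_{a\in A}(-a)$, and write $\widehat{G}=\sum_{g\in G}g$ for the full group sum with $0$ denoting the identity basis element. Under this dictionary the multiset $\Delta(D_i,D_j)$ is exactly the product $D_iD_j^{(-1)}$, so the defining equation of the GSEDF becomes, for each $i$,
\[
\sum_{j\neq i} D_i D_j^{(-1)} \;=\; \lambda_i\bigl(\widehat{G}-0\bigr),
\]
while the PDS condition I want to reach is the single identity $D_iD_i^{(-1)} = k_i\cdot 0 + (k_i-\lambda_i-1)D_i + (k_i-\lambda_i)(\widehat{G}-D_i-0)$.

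The decisive step exploits the partition hypothesis. Since $\{D_1,\dots,D_m\}$ partitions $G\setminus\{0\}$ we have $\sum_{j=1}^m D_j=\widehat{G}-0$, and applying the inversion map (which fixes both $\widehat{G}$ and $0$) gives $\sum_{j=1}^m D_j^{(-1)}=\widehat{G}-0$ as well. I would then isolate the $j=i$ summand by writing
\[
\sum_{j\neq i} D_i D_j^{(-1)} \;=\; D_i\Bigl(\widehat{G}-0-D_i^{(-1)}\Bigr) \;=\; k_i\widehat{G}-D_i-D_iD_i^{(-1)},
\]
where the simplification uses the standard identities $D_i\widehat{G}=k_i\widehat{G}$ (each group element absorbs the full group sum) and $D_i\cdot 0=D_i$. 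Equating this with $\lambda_i(\widehat{G}-0)$ and solving for the self-difference yields
\[
D_iD_i^{(-1)} \;=\; (k_i-\lambda_i)\widehat{G}-D_i+\lambda_i\cdot 0.
\]

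Finally I would read off the coefficient of each group element, using crucially that $0\notin D_i$ (this is exactly what separates the present situation from the difference-set case of Theorem \ref{DS}, where the partition includes $0$). The coefficient of $0$ is $(k_i-\lambda_i)+\lambda_i=k_i$, the coefficient of each $x\in D_i$ is $(k_i-\lambda_i)-1=k_i-\lambda_i-1$, and the coefficient of each $x\in G\setminus(D_i\cup\{0\})$ is $k_i-\lambda_i$; this is precisely the assertion that $D_i$ is a $(v,k_i,k_i-\lambda_i-1,k_i-\lambda_i)$-PDS. Because every manipulation above is an equivalence of group-ring identities, the converse direction follows simply by reversing the computation. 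The only genuine obstacle is careful bookkeeping in the completion step, namely correctly accounting for both the removed $j=i$ term and the absent identity $0$ in the partition, since a slip there would shift the two PDS parameters; beyond this accounting there is no deeper difficulty.
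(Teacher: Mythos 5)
Your argument is correct and complete, but there is nothing in the paper to measure it against: the paper states this theorem as a quoted result from \cite{HP} and gives no proof at all, so your proposal supplies the missing argument rather than replicating one. Moreover, your ``dictionary'' is exactly the group-ring formulation of a GSEDF that the paper itself records in equation~(\ref{1.2}) (there written multiplicatively with identity $1$), so your proof fits the paper's toolkit naturally. The whole computation pivots on the identity
\begin{equation*}
D_iD_i^{(-1)}\;=\;(k_i-\lambda_i)\,S-D_i+\lambda_i\cdot 0,
\qquad S=\sum_{g\in G}g,
\end{equation*}
which, using the partition hypothesis to write $\sum_{j\neq i}D_j^{(-1)}=S-0-D_i^{(-1)}$ together with $D_iS=k_iS$, is equivalent to the GSEDF equation for index $i$; and, by reading off coefficients (namely $k_i$ at $0$, $k_i-\lambda_i-1$ on $D_i$, and $k_i-\lambda_i$ off $D_i\cup\{0\}$), is equivalent to the stated PDS condition. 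Since each step is an equivalence, both directions of the ``if and only if'' follow at once, and your bookkeeping correctly exploits $0\notin D_i$, which is precisely what distinguishes this statement from the full-partition case of Theorem~\ref{DS}. Two cosmetic points: rename your symbol for the group sum, since $\widehat{G}$ already denotes the character group in this paper; and when you say the converse ``follows by reversing the computation,'' it is worth one explicit sentence noting that the partition hypothesis is invoked again there (to reconstruct $\sum_{j\neq i}D_iD_j^{(-1)}$ from $D_iD_i^{(-1)}$) --- this causes no gap, but making it explicit removes any appearance of circularity.
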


In the next section, we will use algebra tools  to prove the nonexistence of some classes of GSEDFs. Now we introduce some definitions and notations in group theory.  Let $\mathbb{Z}[G]$ be a group ring where $G$ is an abelian group of order $v$ with identity 1. For a given subset $D$ of $G$, we denote the group ring element $\sum_{d\in D}d$ by $D$ (by a standard abuse of notation), and the group ring element $\sum_{d\in D}d^{-1}$ by $D^{-1}$. Below  is an equivalent definition of a GSEDF in the form of a group ring $\mathbb{Z}[G]$.

Let $m\geq2$ and $D_1,D_2,\dots,D_m$ be mutually disjoint subsets of an abelian group $G$ with identity $1$, where $|G|=v$ and $|D_i|=k_i$ for $1\leq i\leq m$. Then $\{D_1,D_2,\dots,D_m\}$ is
a $(v,m;k_1,\dots,k_m;\lambda_1,\dots,\lambda_m)$-GSEDF in $G$ if and only if
\begin{equation}\label{1.2}
\sum_{1\leq j\leq m\atop j\neq i}D_iD_j^{-1}=\lambda_i(G\setminus \{1\})
\end{equation}
in $\mathbb{Z}[G]$ for each $i$, $1 \leq i \leq m$.

For a finite abelian group $G$, there are exactly $|G|$ distinct homomorphisms from $G$ to the multiplicative group of complex numbers \cite{L}. We called them {\it characters} of $G$ and they form a group. The group of characters is isomorphic to $G$. So we can label the $|G|$ distinct characters $\{\chi_a: a\in G\}$. Let $\widehat{G}$ denote the character group of an abelian group $G$, and let $\chi_0 \in \widehat{G}$ be the principal character. Each character $\chi \in \widehat{G}$ is extended linearly to the group ring $\mathbb{Z}[G]$ by $$\chi(\sum\limits_{g\in G}a_gg)=\sum\limits_{g\in G}a_g\chi(g).$$

The method of the proof of the following theorem is similar to Lemma 2.2 in \cite{JL}. We omit the proof here for simplicity.

\begin{theorem}\label{G^{N}}
 Suppose $\{D_1, D_2,\dots, D_m\}$ is a $(v,m;k_1,\dots,k_m;\lambda_1,\dots,\lambda_m)$-GSEDF in a finite abelian group $G$ and let $D=\bigcup\limits_{i=1}^{m}D_i$. If $D\neq G$, then there exists at least one non-principal character $\chi$ of $G$ such that $\chi(D)\neq 0$.
\end{theorem}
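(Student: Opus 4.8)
The plan is to argue by contradiction: assume that $\chi(D)=0$ for \emph{every} non-principal character $\chi\in\widehat{G}$, and show that this forces $D=G$. The natural first move is to apply an arbitrary non-principal $\chi$ to the group-ring form \eqref{1.2} of the GSEDF condition. Since $\chi(G)=0$ for non-principal $\chi$, we have $\chi(G\setminus\{1\})=-1$; and since character values are roots of unity, $\chi(D_j^{-1})=\overline{\chi(D_j)}$. Writing $a_i:=\chi(D_i)$ and using that $\chi$ is a ring homomorphism, the $i$-th equation in \eqref{1.2} becomes
\[
a_i\sum_{1\le j\le m,\,j\ne i}\overline{a_j}=-\lambda_i .
\]

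The key step is to feed in the contradiction hypothesis. By assumption $\chi(D)=\sum_{j=1}^{m}a_j=0$, so $\sum_{j\ne i}a_j=-a_i$ and hence $\sum_{j\ne i}\overline{a_j}=-\overline{a_i}$. Substituting collapses the identity above to $a_i\overline{a_i}=\lambda_i$, i.e.\ $|\chi(D_i)|^2=\lambda_i$ for \emph{every} non-principal $\chi$ and every $i$. I expect this to be the only genuinely clever point of the argument: once the hypothesis pins every non-principal character value of $D_i$ to the fixed modulus $\sqrt{\lambda_i}$, the remainder is a counting identity.

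Next I would apply Parseval's relation to each set $D_i$ separately. Because the indicator of $D_i$ has $0/1$ coefficients, orthogonality of characters gives $\sum_{\chi\in\widehat{G}}|\chi(D_i)|^2=v\,k_i$. Removing the principal term $|\chi_0(D_i)|^2=k_i^2$ leaves
\[
\sum_{\chi\ne\chi_0}|\chi(D_i)|^2=k_i(v-k_i).
\]
As each of the $v-1$ summands on the left equals $\lambda_i$ by the previous step, this reads $\lambda_i(v-1)=k_i(v-k_i)$.

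Finally I would compare this with the intrinsic counting relation \eqref{def}. Writing $K=\sum_{j=1}^{m}k_j=|D|$, relation \eqref{def} gives $\lambda_i(v-1)=k_i(K-k_i)$. Equating the two expressions for $\lambda_i(v-1)$ and cancelling $k_i$ (which is positive, since $\lambda_i>0$ forces $k_i\ge 1$) yields $K=v$; as the $D_i$ are disjoint, this means $|D|=v$, hence $D=G$, contradicting $D\ne G$. The main obstacle is purely the identity $|\chi(D_i)|^2=\lambda_i$; everything downstream is a routine application of Parseval's relation and the parameter equation \eqref{def}. (I note that an even shorter route avoids the GSEDF equations entirely: if $\chi(D)=0$ for all non-principal $\chi$, Fourier inversion forces the indicator of $D$ to be the constant $|D|/v$, which lies in $\{0,1\}$ only when $D=\varnothing$ or $D=G$, both excluded here.)
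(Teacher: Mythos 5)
Your proposal is correct. It is worth knowing, though, that the paper never prints a proof of this theorem: it states that the method is ``similar to Lemma 2.2 in \cite{JL}'' and omits it, and that Jedwab--Li argument is exactly your closing parenthetical remark: if $\chi(D)=0$ for every non-principal $\chi$, then Fourier inversion forces every coefficient of the group-ring element $D$ to equal the constant $|D|/v$, which cannot lie in $\{0,1\}$ because $0<|D|<v$ (each $k_i\ge 1$ and $D\ne G$). So your ``shorter route'' is in fact the paper's route, and its strength is that it uses no GSEDF structure whatsoever --- it holds for any nonempty proper subset of $G$. Your main argument is a genuinely different, and also correct, proof: applying a non-principal $\chi$ to \eqref{1.2} and feeding in $\chi(D)=0$ collapses the GSEDF equation to $|\chi(D_i)|^2=\lambda_i$ (this is precisely the paper's later equation \eqref{D} specialized to $\chi(D)=0$); Parseval then gives $\lambda_i(v-1)=\sum_{\chi\ne\chi_0}|\chi(D_i)|^2=k_i(v-k_i)$, and comparison with the counting relation \eqref{def}, which says $\lambda_i(v-1)=k_i(|D|-k_i)$, cancels $k_i$ to yield $|D|=v$, i.e.\ $D=G$, the desired contradiction. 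Every step checks out: $\chi(D_j^{-1})=\overline{\chi(D_j)}$ since character values are roots of unity, $\sum_{\chi\in\widehat{G}}|\chi(D_i)|^2=vk_i$ by orthogonality, and $k_i\ge 1$ makes the cancellation licit. The trade-off: your longer route spends Parseval plus the GSEDF equations where the inversion argument needs neither, but it isolates the identity $|\chi(D_i)|^2=\lambda_i$, which is the natural degenerate companion of the quadratic analysis in $\alpha_j$ that the paper performs in Section 3 under the opposite hypothesis $\chi(D)\neq 0$.
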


A character $\chi$ of group $G$ may be nontrivial on $G$, but still annihilate a whole subgroup $D$ of $G$, in the sense that $\chi(d)=1$ for all $d\in D$. The set of all characters of $G$ annihilating a given subgroup $H$ is called the {\it annihilator} of $D$ in $\widehat{G}$.

\begin{theorem}{\rm (\cite{LH})}
Let $D$ be a subgroup of the finite abelian group of $G$. Then the annihilator of $D$ in $\widehat{G}$ is a subgroup of $\widehat{G}$ of order $|G|/|D|$.
\end{theorem}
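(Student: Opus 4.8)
The plan is to identify the annihilator with the character group of the quotient $G/D$ and then invoke the fact, already recorded earlier in this section, that a finite abelian group has as many characters as it has elements. Write $A = \{\chi \in \widehat{G} : \chi(d)=1 \text{ for all } d\in D\}$ for the annihilator of $D$. First I would check that $A$ is a subgroup of $\widehat{G}$: the principal character $\chi_0$ certainly lies in $A$, and if $\chi,\psi \in A$ then for every $d\in D$ we have $(\chi\psi^{-1})(d)=\chi(d)\psi(d)^{-1}=1\cdot 1=1$, so $\chi\psi^{-1}\in A$. This step is immediate from the multiplicativity of characters.

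The key step is to produce a group isomorphism $A \cong \widehat{G/D}$. Given $\chi\in A$, since $\chi$ is trivial on $D$ it takes a single value on each coset $gD$, so the rule $\overline{\chi}(gD)=\chi(g)$ is well defined and determines a character $\overline{\chi}$ of $G/D$. The assignment $\chi\mapsto\overline{\chi}$ is a homomorphism $A\to\widehat{G/D}$ because products of characters are computed pointwise. Conversely, any $\psi\in\widehat{G/D}$ pulls back along the quotient map $\pi\colon G\to G/D$ to a character $\psi\circ\pi$ of $G$ that is trivial on $D$, hence lies in $A$; and this pullback is a two-sided inverse of $\chi\mapsto\overline{\chi}$. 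Thus $A$ and $\widehat{G/D}$ are isomorphic as groups.

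Finally I would count. The quotient $G/D$ is a finite abelian group of order $|G|/|D|$, and by the fact recalled earlier in this section (the character group of a finite abelian group is isomorphic to the group itself) we get $|\widehat{G/D}|=|G/D|=|G|/|D|$. Combining this with $A\cong\widehat{G/D}$ yields $|A|=|G|/|D|$, as claimed. The only point requiring genuine care is the well-definedness of the descended character $\overline{\chi}$ and the verification that pullback along $\pi$ really inverts the descent map; beyond that the argument is a routine check of the homomorphism axioms, so I expect no substantial obstacle.
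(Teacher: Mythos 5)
The paper itself contains no proof of this statement: it is quoted as a known result from Lidl and Niederreiter \cite{LH}, so there is no internal argument to compare yours against. Your proof is correct and complete, and it is the standard one: identifying the annihilator $A$ with $\widehat{G/D}$ by descending characters along the quotient map $\pi\colon G\to G/D$ (with pullback as the two-sided inverse), and then invoking the fact recorded earlier in the paper's Section 2 that a finite abelian group has exactly as many characters as elements, which gives $|A|=|\widehat{G/D}|=|G/D|=|G|/|D|$. All the checks you flag, namely well-definedness of the descended character and the verification that pullback inverts descent, go through exactly as you describe.
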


From the above theorem, we can obtain the following corollary easily.

\begin{corollary}\label{coro01}
Let $D$ be the set of the nontrivial subgroup of the finite abelian group of $G$. Then there exists a nonprincipal character $\chi\in\widehat{G}$ such that $\chi(D)=|D|$.
\end{corollary}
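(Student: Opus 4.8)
The plan is to read this off directly from the annihilator theorem that immediately precedes it, since Corollary~\ref{coro01} is essentially a counting consequence of that result together with the definition of $\chi(D)$ as a group ring evaluation. I would first fix the interpretation of the hypothesis: $D$ is a proper subgroup of $G$, so that $1 \leq |D| < |G|$ and in particular the index $|G|/|D|$ is at least $2$. This properness is exactly what makes the statement nonvacuous, so I would flag it at the outset.

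The core argument is then two lines. Recall that the annihilator of $D$ in $\widehat{G}$ is
\begin{equation*}
\mathrm{Ann}(D)=\{\chi\in\widehat{G}:\chi(d)=1\text{ for all }d\in D\}.
\end{equation*}
By the preceding theorem (from \cite{LH}), $\mathrm{Ann}(D)$ is a subgroup of $\widehat{G}$ of order $|G|/|D|\geq 2$. Since this order exceeds $1$, the subgroup $\mathrm{Ann}(D)$ must contain a character other than the principal character $\chi_0$; pick any such non-principal $\chi\in\mathrm{Ann}(D)$. For this $\chi$ one computes, using that $\chi$ is extended linearly to $\mathbb{Z}[G]$ and that $\chi(d)=1$ for every $d\in D$,
\begin{equation*}
\chi(D)=\sum_{d\in D}\chi(d)=\sum_{d\in D}1=|D|,
\end{equation*}
which is the claimed identity.

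There is no real obstacle here, as the work has been done in the annihilator theorem; the only point requiring care is the hypothesis. If one instead reads ``nontrivial'' as meaning $D\neq\{1\}$ but allows $D=G$, the statement fails, because then $\mathrm{Ann}(D)=\{\chi_0\}$ and no non-principal character annihilates $D$. Thus I would make explicit that the corollary is being applied to a \emph{proper} subgroup $D\subsetneq G$, and this is indeed how it will be used later, where $D=\bigcup_{i=1}^m D_i$ happens to form a proper subgroup of $G$ and Theorem~\ref{G^{N}} is being complemented by producing a non-principal character on which $\chi(D)=|D|\neq 0$.
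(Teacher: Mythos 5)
Your proof is correct and is exactly the argument the paper intends: the paper derives this corollary ``easily'' from the preceding annihilator theorem, namely by picking a nonprincipal character in the annihilator of $D$ (which has order $|G|/|D|\geq 2$) and evaluating $\chi(D)=\sum_{d\in D}\chi(d)=|D|$. Your added caveat that ``nontrivial'' must be read as \emph{proper} ($D\subsetneq G$) is a legitimate and worthwhile clarification of the paper's loosely worded statement, and it matches how the corollary is actually invoked later (applied to subsets of a proper subgroup).
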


For later use, we need the following two lemmas.

\begin{lemma}\label{GSEDFm}
Let $m\ge 3$ and $k=k_1+k_2+\dots+k_m$, where $k_1\leq k_2\leq \dots\leq k_m$. Suppose $\{D_1,D_2,\dots,D_m\}$ is a $(v,m;k_1,\dots,k_m;$ $\lambda_1,\dots,\lambda_m)$-GSEDF. Then we have\\ (1) $\lambda_1\leq\lambda_2\leq\dots\leq\lambda_m$;\\ (2) $\lambda_1+\lambda_2\dots+\lambda_{m-1}>\lambda_m$; and\\ (3) $\lambda_i\leq k_i$   if $k\leq v$, and
$\lambda_i< k_i$,  if $k< v$.
\end{lemma}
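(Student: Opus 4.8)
The plan is to reduce all three statements to the single counting identity already recorded as equation~\eqref{def}. Since $\sum_{j\neq i}k_j=k-k_i$, equation~\eqref{def} reads $k_i(k-k_i)=\lambda_i(v-1)$, so that
\begin{equation*}
\lambda_i=\frac{k_i(k-k_i)}{v-1},\qquad 1\le i\le m.
\end{equation*}
Because each $D_i$ is nonempty and each $\lambda_i$ is a positive integer, we have $k_i\ge 1$ and $k-k_i\ge 1$ throughout. With this formula in hand every part becomes an elementary manipulation, and the only place that needs a small idea is part~(2).

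For part~(1), I would take two indices $i<j$ (so $k_i\le k_j$ by the assumed ordering) and compute the difference directly:
\begin{equation*}
\lambda_j-\lambda_i=\frac{k_j(k-k_j)-k_i(k-k_i)}{v-1}=\frac{(k_j-k_i)\,(k-k_i-k_j)}{v-1}.
\end{equation*}
Here $k_j-k_i\ge 0$, while $k-k_i-k_j=\sum_{l\neq i,j}k_l\ge 1>0$ because $m\ge 3$ leaves at least one remaining summand, each of size at least $1$. Hence $\lambda_j-\lambda_i\ge 0$, giving the chain $\lambda_1\le\lambda_2\le\cdots\le\lambda_m$.

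Part~(2) is the heart of the argument. Using $\sum_{i=1}^{m}k_i(k-k_i)=k^2-\sum_{i=1}^{m}k_i^2$ and isolating the last term, I expect the quantity $\sum_{i=1}^{m-1}\lambda_i-\lambda_m$ to collapse:
\begin{equation*}
(v-1)\Big(\sum_{i=1}^{m-1}\lambda_i-\lambda_m\Big)=\sum_{i=1}^{m-1}k_i(k-k_i)-k_m(k-k_m)=\Big(\sum_{i=1}^{m-1}k_i\Big)^{2}-\sum_{i=1}^{m-1}k_i^{2}=2\sum_{1\le i<j\le m-1}k_ik_j,
\end{equation*}
where the middle equality uses $k-k_m=\sum_{i=1}^{m-1}k_i$. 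Since $m\ge 3$ forces $m-1\ge 2$, the final sum contains at least one positive product $k_ik_j\ge 1$, so the whole expression is strictly positive and $\sum_{i=1}^{m-1}\lambda_i>\lambda_m$. The main obstacle is verifying this collapse cleanly; I would carry it out by first rewriting $k^2-\sum_{i=1}^{m}k_i^2-2k_m(k-k_m)=(k-k_m)^2-\sum_{i=1}^{m-1}k_i^2$ and then expanding $(\sum_{i=1}^{m-1}k_i)^2$.

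Finally, for part~(3) I would compare $\lambda_i$ with $k_i$ through the formula: since $k_i\ge 1$,
\begin{equation*}
\lambda_i\le k_i\iff \frac{k-k_i}{v-1}\le 1\iff k\le v-1+k_i,
\end{equation*}
and $v-1+k_i\ge v$. Thus $k\le v$ yields $\lambda_i\le k_i$, and the same computation with strict inequalities shows that $k<v$ yields $\lambda_i<k_i$, completing the plan.
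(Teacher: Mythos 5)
Your proposal is correct and takes essentially the same route as the paper: every part is reduced to the counting identity $\lambda_i(v-1)=k_i(k-k_i)$, with part (1) via $(\lambda_j-\lambda_i)(v-1)=(k_j-k_i)(k-k_i-k_j)\ge 0$, part (2) via the same positive quantity the paper writes as $\sum_{i=1}^{m-1}k_i(k-k_i-k_m)$ (which is exactly your $2\sum_{1\le i<j\le m-1}k_ik_j$), and part (3) via $\lambda_i/k_i=(k-k_i)/(v-1)$. There are no gaps; the arguments differ only in trivial algebraic regrouping.
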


\begin{proof}
 By the equation (\ref{def}) we have  $\lambda_i(v-1)=k_i(k-k_i)$, $1\le i\le m$. Then we have
$\lambda_j(v-1)-\lambda_i(v-1)=k_j(k-k_j)-k_i(k-k_i)$, $1\le i< j\le m$. That is
$(\lambda_j-\lambda_i)(v-1)=(k_j-k_i)(k-k_j-k_i)$. Since $m\ge 3$ and $k_i\le k_j$, we know that $\lambda_j\ge \lambda_i$.
Thus $\lambda_1\leq\lambda_2\leq\dots\leq\lambda_m$.

 For the second case, we have $(\lambda_1+\lambda_2\dots+\lambda_{m-1}-\lambda_m)(v-1)=k_1(k-k_1)+k_2(k-k_2)+\dots+k_{m-1}(k-k_{m-1})-k_m(k-k_m)
=k_1(k-k_1)+k_2(k-k_2)+\dots+k_{m-1}(k-k_{m-1})-k_m(k_1+k_2+\dots+k_{m-1})=k_1(k-k_1-k_m)+k_2(k-k_2-k_m)+\dots+k_{m-1}(k-k_{m-1}-k_m)>0$. Then
 we get $\lambda_1+\lambda_2\dots+\lambda_{m-1}>\lambda_m$.

For the last case, from $\lambda_i(v-1)=k_i(k-k_i)$, $1\le i\le m$, we know that $\frac {\lambda_i} {k_i}=\frac {k-k_i} {v-1}$. Then the conclusion follows.
\end{proof}

\begin{lemma}\label{ineq}
Let $x_i>0$, $1\leq i\leq n+1$, $n\geq2$.\\
(1) If $x_1+x_2+\dots+x_n>x_{n+1}$, then $\sqrt{x_1}+\sqrt{x_2}+\dots+\sqrt{x_n}>\sqrt{x_{n+1}}.$ \\
(2) $\sqrt{1+x_1}+ \sqrt{1+x_2}+\dots+\sqrt{1+x_n}>n-1+\sqrt{1+x_1+x_2+\dots+x_{n}}.$
\end{lemma}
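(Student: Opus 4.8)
The plan is to handle the two parts separately, with part (2) reduced to a two-variable inequality that also serves as the base case of an induction. Part (1) is almost immediate, while part (2) requires identifying the right recursive mechanism.

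For part (1), I would simply square the left-hand side. Since every $x_i>0$, all the cross terms are positive, so $\left(\sqrt{x_1}+\dots+\sqrt{x_n}\right)^2=x_1+\dots+x_n+2\sum_{1\le i<j\le n}\sqrt{x_ix_j}\ge x_1+\dots+x_n>x_{n+1}$. Taking the positive square root of the two outer quantities then gives $\sqrt{x_1}+\dots+\sqrt{x_n}>\sqrt{x_{n+1}}$, as required. This step is routine and carries no genuine difficulty; only the positivity of the cross terms and of the $x_i$ is used.

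For part (2), the engine is the two-variable inequality $\sqrt{1+a}+\sqrt{1+b}>1+\sqrt{1+a+b}$, valid for all $a,b>0$. I would prove it by squaring: both sides are positive, and after squaring the claim becomes $\sqrt{(1+a)(1+b)}>\sqrt{1+a+b}$, which upon squaring once more reduces to $(1+a)(1+b)>1+a+b$, i.e. $ab>0$, true by positivity. With this in hand, I would prove part (2) by induction on $n$. The base case $n=2$ is precisely the two-variable inequality. For the inductive step, write $S_n=x_1+\dots+x_n$; the inductive hypothesis gives $\sqrt{1+x_1}+\dots+\sqrt{1+x_n}>(n-1)+\sqrt{1+S_n}$, and adding $\sqrt{1+x_{n+1}}$ to both sides followed by the two-variable inequality applied with $a=S_n$ and $b=x_{n+1}$ yields $\sum_{i=1}^{n+1}\sqrt{1+x_i}>(n-1)+\sqrt{1+x_{n+1}}+\sqrt{1+S_n}>(n-1)+1+\sqrt{1+S_n+x_{n+1}}=n+\sqrt{1+S_{n+1}}$, which closes the induction.

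The only point that requires attention is recognizing that the two-variable inequality is flexible enough to be applied with one of its arguments being the \emph{accumulated partial sum} $S_n$ rather than a single $x_i$; once this is noticed, it simultaneously furnishes the base case and drives every inductive step, and no part of the argument presents a real obstacle.
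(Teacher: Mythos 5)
Your proposal is correct and follows essentially the same route as the paper: the base case $n=2$ via squaring (reducing to $ab>0$), followed by induction where the two-variable inequality is reapplied with the accumulated sum $S_n$ as one argument. The only difference is cosmetic: you spell out part (1) by squaring, which the paper dismisses as obvious.
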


\begin{proof} The case (1) is obvious. Now we prove the case (2) by induction on $n$. It is easy to see  that
 $ \sqrt{1+x_1}+ \sqrt{1+x_2}>1+\sqrt{1+x_1+x_2}$ since
 $ \sqrt{(1+x_1)(1+x_2)}>\sqrt{1+x_1+x_2}$. So the statement is right for $n=2$.
Suppose that the statement holds while $n\leq k$ for any $k\geq 2$.
Then we have
\begin{align*}
&\sqrt{1+x_1}+\dots+\sqrt{1+x_{k}}+\sqrt{1+x_{k+1}}\\ >&k-1+\sqrt{1+x_1+x_2+\dots+x_{k}}+\sqrt{1+x_{k+1}}\\
>&k-1+1+\sqrt{1+x_1+x_2+\dots+x_{k}+x_{k+1}}\\
=&k+\sqrt{1+x_1+x_2+\dots+x_{k}+x_{k+1}}
\end{align*}
So the statement is right for $n=k+1$. Thus the conclusion follows.
\end{proof}

\begin{theorem}
 Suppose  $m\geq2$ and $\{D_1, D_2,\dots,D_m\}$ is a $(v,m;k_1,\dots,k_m;\lambda_1,\dots,\lambda_m)$-GSEDF in a finite abelian group $G$. Let $D=\bigcup\limits_{i=1}^{m}D_i$. If $D\neq G$,  then  $gD_i$ can not be a  subset of any proper subgroup of $G$ for each $g\in G$ and each $i\in \{1,2,\dots,m\}$.
\end{theorem}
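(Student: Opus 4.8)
The plan is to argue by contradiction, extracting a single well-chosen character that forces $|\chi(D)|$ to take a value incompatible with $\chi(D)$ being an algebraic integer. Suppose, for some proper subgroup $H\le G$, some $g\in G$ and some index $i$, we had $gD_i\subseteq H$. Since $H$ is proper, its annihilator in $\widehat{G}$ has order $|G|/|H|\ge 2$, so it contains a nonprincipal character $\chi$, which by definition satisfies $\chi(h)=1$ for all $h\in H$. Because $gD_i\subseteq H$, every $d\in D_i$ gives $\chi(g)\chi(d)=\chi(gd)=1$, so $\chi$ is constant on $D_i$ with value $\chi(g)^{-1}$; hence $\chi(D_i)=k_i\chi(g)^{-1}$ and $|\chi(D_i)|=k_i$.

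Next I would feed this $\chi$ into the group-ring identity (\ref{1.2}) for the index $i$. Applying the nonprincipal $\chi$ to $\sum_{j\neq i}D_iD_j^{-1}=\lambda_i(G\setminus\{1\})$, and using $\chi(G)=0$ (so that $\chi(G\setminus\{1\})=-1$) together with $\chi(D_j^{-1})=\overline{\chi(D_j)}$ and the disjointness of the $D_j$ (so $\chi(D)=\sum_j\chi(D_j)$), I obtain $\chi(D_i)\bigl(\overline{\chi(D)}-\overline{\chi(D_i)}\bigr)=-\lambda_i$, that is, $\chi(D_i)\overline{\chi(D)}=|\chi(D_i)|^2-\lambda_i=k_i^2-\lambda_i$. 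Substituting $\chi(D_i)=k_i\chi(g)^{-1}$ and taking moduli yields $|\chi(D)|=k_i-\lambda_i/k_i$. Finally the counting relation (\ref{def}) gives $\lambda_i=k_i(k-k_i)/(v-1)$ with $k=\sum_j k_j$, so this simplifies to $|\chi(D)|=(k_iv-k)/(v-1)=:t$.

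The contradiction then comes from integrality. Since $\chi(D)$ is a sum of $k$ roots of unity it is an algebraic integer, so $|\chi(D)|^2=t^2$ is a rational algebraic integer, i.e.\ $t^2\in\Z$, which forces $t\in\Z$. But $t\in\Z$ is equivalent to $(v-1)\mid(k_iv-k)$, and since $v\equiv 1\pmod{v-1}$ this is equivalent to $(v-1)\mid(k-k_i)$. Here the hypothesis $D\neq G$ enters decisively: it gives $k\le v-1$, and since $m\ge 2$ with every $k_j\ge 1$ we have $1\le k-k_i\le v-2<v-1$, so $(v-1)$ cannot divide $k-k_i$, the desired contradiction. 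The step I expect to require the most care is the exact evaluation of $|\chi(D)|$: one must correctly handle the conjugates $\chi(D_j^{-1})=\overline{\chi(D_j)}$ and confirm $k_iv-k>0$ (which holds because $k_iv\ge v>k$), so that $t$ is a genuine positive non-integer; once that value is pinned down, the remaining arithmetic is a short check.
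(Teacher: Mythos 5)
Your proof is correct and follows essentially the same route as the paper's: both take a nonprincipal character annihilating the proper subgroup (Corollary \ref{coro01}), apply it to the group-ring identity (\ref{1.2}) to obtain $\chi(D_i)\bigl(\overline{\chi(D)}-\overline{\chi(D_i)}\bigr)=-\lambda_i$, and then derive a contradiction from the fact that a rational algebraic integer must be an integer. The only differences are cosmetic: the paper first reduces to $g=1$ by translation-invariance of GSEDFs and concludes $\lambda_i/k_i\in\Z$, hence $\lambda_i\ge k_i$, contradicting Lemma \ref{GSEDFm}(3), whereas you keep the factor $\chi(g)^{-1}$, pass to moduli, and phrase the same contradiction as the impossible divisibility $(v-1)\mid(k-k_i)$.
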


\begin{proof}
It is easy to check that $\{gD_1,gD_2,\dots,gD_m\}$ is a $(v,m;k_1,\dots,k_m;\lambda_1,\dots,\lambda_m)$-GSEDF since $\{D_1,D_2,\dots,D_m\}$ is a $(v,m;k_1,\dots,k_m;\lambda_1,\dots,\lambda_m)$-GSEDF. So we only need to prove the conclusion is right when $g$ is the unity of $G$.
 If  $D_i$ is a subset of some proper subgroup of $G$, then by Corollary \ref{coro01} there exists a nonprincipal character $\chi$ such that $\chi(D_i)=k_i$. Then we have $k_i(\overline{\chi(D)}-k_i)=-\lambda_i$. Thus $\chi(D)=k_i-\frac {\lambda_i} {k_i}$ is a rational number. Since $\chi(D)$ is also an algebraic integer, we know that $\frac {\lambda_i} {k_i}$ is an integer.  So we have $\lambda_i\geq k_i$. This contradicts Lemma \ref{GSEDFm} (3) since $D\neq G$.
\end{proof}

\section{Nonexistence of GSEDFs}

In this section, we shall give some nonexistence results of GSEDFs. We start with two conclusions from number theory.

\begin{theorem}\label{prime}
 Let  $p$ be a prime and $k_1+\dots+k_m\le p$, $m\geq 2$. Then there does not exist a $(p+1,m;k_1,\dots,k_m;\lambda_1,\dots,\lambda_m)$-GSEDF.
\end{theorem}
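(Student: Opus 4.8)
The plan is to bypass the character-theoretic machinery entirely and instead extract a contradiction from the elementary counting relation~(\ref{def}), which is already forced by the definition of a GSEDF. Write $k=k_1+\dots+k_m$ and recall that here $v-1=p$. For each fixed $i$, relation~(\ref{def}) reads $k_i(k-k_i)=\sum_{j\neq i}k_ik_j=\lambda_i(v-1)=\lambda_i\,p$. Since a GSEDF requires every $\lambda_i$ to be a positive integer, this identity forces the prime $p$ to divide the product $k_i(k-k_i)$; the whole proof will consist in showing that the hypotheses make this impossible.

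First I would pin down the ranges of the two factors $k_i$ and $k-k_i$. Each part is nonempty, since $k_i(k-k_i)=\lambda_i p>0$ already rules out $k_i=0$, so $k_i\ge 1$ for all $i$; combined with $m\ge 2$ this gives $k\ge k_i+1$, whence $1\le k_i\le k-1\le p-1$. Symmetrically, $k-k_i=\sum_{j\neq i}k_j\ge 1$ because at least one other part is present, while $k-k_i\le k-1\le p-1$ because $k_i\ge 1$ and $k\le p$. Thus both $k_i$ and $k-k_i$ lie strictly between $0$ and $p$. Since $p$ is prime and $0<k_i<p$, $0<k-k_i<p$, it divides neither factor, so $p\nmid k_i(k-k_i)$; this contradicts $p\mid k_i(k-k_i)$, and no positive integer $\lambda_i$ can satisfy the counting relation. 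Hence no GSEDF with these parameters exists.

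I do not expect a genuine obstacle: the argument is a single divisibility observation, and the only point demanding care is the pair of strict inequalities $0<k_i<p$ and $0<k-k_i<p$, which is exactly where the hypotheses $m\ge 2$ (guaranteeing at least two nonempty parts) and $k_1+\dots+k_m\le p$ are used. In contrast to the structural nonexistence results obtained earlier through Theorem~\ref{G^{N}} and Corollary~\ref{coro01}, here the primality of $p$ does all the work and no character sums are needed, which is consistent with this statement being advertised as a number-theoretic conclusion.
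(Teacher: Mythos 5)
Your proof is correct and follows essentially the same route as the paper's: both derive $\lambda_i p = k_i(k-k_i)$ from the counting relation (\ref{def}) and then use primality of $p$ together with the bounds $0 < k_i < p$ and $0 < k - k_i < p$ (forced by $k \le p$ and the positivity of the parts) to reach a divisibility contradiction. Your write-up is slightly more careful in justifying the strict inequalities, but the underlying argument is identical.
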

\begin{proof}
 Let $k=k_1+\dots+k_m$. If there  exists a $(p+1,m;k_1,\dots,k_m;\lambda_1,\dots,\lambda_m)$-GSEDF, then  we  have  $\lambda_ip=k_i(k-k_i)$, $1\le i\le m$. So $p\mid k_i$ or $p\mid k-k_i$ since $p$ is a prime. It is a contradiction to $p>k_i$ or $p>k-k_i$ which can be obtained from $k\le p$.
\end{proof}

\begin{theorem}
 Let $p_1$ and $p_2$ be different primes, and $k_1+\dots+k_m\le p_1p_2$, $m\geq 3$. Then there does not exist a $(p_1p_2+1,m;k_1,\dots,k_m;\lambda_1,\dots,\lambda_m)$-GSEDF.
\end{theorem}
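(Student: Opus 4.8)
The plan is to argue by elementary number theory, pushing the divisibility idea behind Theorem \ref{prime} one step further. Write $v-1=p_1p_2$ and $k=k_1+\dots+k_m$, and recall from (\ref{def}) that any such GSEDF would satisfy $\lambda_i p_1p_2=k_i(k-k_i)$ for each $i$, so that $p_1p_2\mid k_i(k-k_i)$ for all $i$. Since $k\le p_1p_2$ and $m\ge 2$ force $0<k_i<p_1p_2$ and $0<k-k_i<p_1p_2$, neither factor can be a multiple of $p_1p_2$; hence the whole divisibility must be realised by how the two distinct primes distribute between $k_i$ and $k-k_i$. I would split the argument according to whether $k$ is divisible by one of the two primes.

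First I would dispose of the case $p_1\mid k$ (the case $p_2\mid k$ being symmetric). Here, for each $i$, $k_i(k-k_i)\equiv -k_i^2\pmod{p_1}$, so $p_1\mid k_i^2$ and therefore $p_1\mid k_i$. Writing $k_i=p_1k_i'$ and $k=p_1k'$ with $k'\le p_2$, substitution gives $\lambda_i p_2=p_1k_i'(k'-k_i')$, whence $p_2\mid k_i'(k'-k_i')$. But $0<k_i'<p_2$ and $0<k'-k_i'<p_2$, so $p_2$ divides neither factor, a contradiction of exactly the same shape as in the proof of Theorem \ref{prime}.

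The substantial case is $p_1\nmid k$ and $p_2\nmid k$, which also forces $k<p_1p_2$ strictly. The structural observations I would establish are: (i) for each prime $p\in\{p_1,p_2\}$ and each $i$, the congruence $k_i(k-k_i)\equiv 0\pmod p$ gives $k_i\equiv 0$ or $k_i\equiv k\pmod p$, and since $p\nmid k$ these are distinct residues; (ii) no $k_i$ is divisible by both primes, since $p_1p_2\mid k_i$ would give $k_i\ge p_1p_2>k\ge k_i$; (iii) no $k_i$ is coprime to both primes, because $p_1\nmid k_i$ and $p_2\nmid k_i$ force $k_i\equiv k\pmod{p_1}$ and $k_i\equiv k\pmod{p_2}$, so by the Chinese Remainder Theorem $k_i\equiv k\pmod{p_1p_2}$, and $0<k_i,k<p_1p_2$ then yield $k_i=k$, which is impossible for $m\ge 2$. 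Consequently every $k_i$ is divisible by exactly one of the two primes.

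To finish, let $a$ be the number of indices with $p_1\mid k_i$ and $b$ the number with $p_2\mid k_i$, so $m=a+b$. Reducing $\sum_i k_i=k$ modulo $p_2$ (the $b$ terms vanish and the $a$ terms are each $\equiv k\not\equiv 0$) gives $a\equiv 1\pmod{p_2}$, and symmetrically $b\equiv 1\pmod{p_1}$. A size bound then closes the argument: each of the $a$ parts divisible by $p_1$ is at least $p_1$, so if $a\ge 2$ then $a\ge p_2+1$ and these parts alone sum to at least $(p_2+1)p_1>p_1p_2\ge k$, impossible; hence $a=1$, and symmetrically $b=1$, giving $m=2$ and contradicting $m\ge 3$. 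I expect the main case to be the only real obstacle, and within it the decisive point is the Chinese Remainder step (iii) eliminating the parts coprime to both primes, since it is precisely here that the strict inequality $k<p_1p_2$ is needed; the counting congruences together with the elementary size estimate then leave no room, and the hypothesis $m\ge 3$ enters only at the final line.
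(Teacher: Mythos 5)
Your proof is correct, and while it rests on the same counting relation $\lambda_i p_1p_2=k_i(k-k_i)$ and the same structural core as the paper's proof---namely that every $k_i$ must be divisible by exactly one of $p_1,p_2$---it reaches the final contradiction by a genuinely different route. The paper proves the structural core uniformly (if $p_1\nmid k_i$ and $p_2\nmid k_i$, then $p_1p_2\mid(k-k_i)$, impossible since $0<k-k_i<p_1p_2$), then partitions the indices into the $l$ with $p_1\mid k_i$ and the $m-l$ with $p_2\mid k_i$, shows that not all indices can lie in one class, and in each of the cases $l\ge 2$ and $l=1$ exhibits an explicit quantity (e.g.\ $\sum_{i=2}^{l}k_i$) that is divisible by both primes yet lies strictly between $0$ and $p_1p_2$. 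You instead split off the case $p_1\mid k$ (or $p_2\mid k$), reducing it to the one-prime argument of Theorem \ref{prime} after factoring out $p_1$, and in the remaining case $p_1\nmid k$, $p_2\nmid k$ you prove the structural core via the Chinese Remainder Theorem and then finish by counting: with $a$ and $b$ the numbers of parts divisible by $p_1$ and by $p_2$ respectively, reducing $\sum_i k_i=k$ modulo each prime gives $a\equiv 1\pmod{p_2}$ and $b\equiv 1\pmod{p_1}$, and the size bounds $ap_1\le k\le p_1p_2$ and $bp_2\le k\le p_1p_2$ then force $a=b=1$, i.e.\ $m=2$, contradicting $m\ge 3$. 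The trade-off is that the paper's endgame needs no case distinction on whether a prime divides $k$ (your counting congruences genuinely require $p_1,p_2\nmid k$, which is why your case split is necessary and not cosmetic), while your endgame isolates precisely what survives at $m=2$---exactly one part divisible by each prime, as in the $(16,2;5,9;3,3)$-GSEDF of Theorem \ref{g16}---making it transparent that the hypothesis $m\ge 3$ is used only in the very last line. Both arguments are elementary and complete; every divisibility and size estimate in yours checks out.
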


\begin{proof}
Let $k=k_1+\dots+k_m$. If there exists a $(p_1p_2+1,m;k_1,\dots,k_m;\lambda_1,\dots,\lambda_m)$-GSEDF, then $\lambda_ip_1p_2=k_i(k-k_i)$, $1\le i\le m$. For each $i$, if $p_1\nmid k_i$ and $p_2\nmid k_i$, then $p_1|(k-k_i)$ and $p_2|(k-k_i)$, so $p_1p_2|(k-k_i)$ since $p_1$ and $p_2$ are different primes. It is  a contradiction to $k-k_i<p_1p_2$ since $m\geq 3$  and $k_1+\dots+k_m\le p_1p_2$. Thus we have proved that either $p_1|k_i$ or $p_2|k_i$, $1\le i\le m$.

On the other hand, we can prove that if $p_1|k_i$, then  $p_2\nmid k_i$. Otherwise, $p_1p_2|k_i$ which is a contradiction to $k_i<p_1p_2$ since $m\geq 3$ and $k_1+\dots+k_m\le p_1p_2$. Similarly, we can prove that if  $p_2|k_i$, then  $p_1\nmid k_i$. So we have either $p_1|k_i$, $p_2\nmid k_i$ and $p_2|(k-k_i)$  or $p_2|k_i$, $p_1\nmid k_i$ and $p_1|(k-k_i)$.

Further, if $p_1|k_i$ holds for all $1\le i\le m$, then $p_2|(k-k_i)$ holds for all $1\le i\le m$, and $p_1|k$ holds. So $p_1|(k-k_i)$ holds for all $1\le i\le m$.  Thus, $p_1p_2|(k-k_i)$ holds for all $1\le i\le m$.  It is  a contradiction to $k-k_i<p_1p_2$. Thus, without lose of generality, we suppose that $p_1|k_i$ and $p_2|(k-k_i)$ for $1\le i\le l$, and $p_2|k_j$ and $p_1|(k-k_j)$ for $l+1\le j\le m$.

If $l\ge 2$, then we have $p_2|(k-k_1-\sum\limits_{j=l+1}^mk_j)$. That is $p_2|(\sum\limits_{i=1}^lk_i-k_1)$. Note that $p_1|(\sum\limits_{i=1}^lk_i-k_1)$. So we have $p_1p_2|(\sum\limits_{i=1}^lk_i-k_1)$. It is a contradiction. Similarly, if $l=1$, then we have $p_1|(k-k_{l+1}-\sum\limits_{i=1}^lk_i)$. That is $p_1|(\sum\limits_{j=l+1}^mk_j-k_{l+1})$. Note that $p_2|(\sum\limits_{j=l+1}^mk_j-k_{l+1})$. So we have $p_1p_2|(\sum\limits_{j=l+1}^mk_j-k_{l+1})$. It is a contradiction.
\end{proof}

For the next theorem, we need the following notations. Suppose $\{D_1, D_2,\dots, D_m\}$ is a $(v,m;k_1,\dots,k_m;\lambda_1,\dots,\lambda_m)$-GSEDF in a finite abelian group $G$ and let $D=\bigcup\limits_{i=1}^{m}D_i$. Then the equation (\ref{1.2}) is equivalent to
\begin{equation}\label{j}
D_jD^{-1}-D_jD_j^{-1}=\lambda_j(G\setminus\{1\})
\end{equation}
 in $\mathbb{Z}(G)$ for $1\leq j\leq m$.

For GSEDFs with $D\not=G$,  by Theorem \ref{G^{N}} there exists a nonprincipal character $\chi\in \widehat{G}$ such that $\chi(D)\neq 0$. So we apply this character on the equation (\ref{j}) to obtain \begin{equation}\label{D}
\chi(D_j)\overline{\chi(D)}-\chi(D_j)\overline{\chi(D_j)}=-\lambda_j.
\end{equation}

And from the equation (\ref{D}), we can get that $$\chi(D_j)=\frac {|\chi(D_j)|^{2}-\lambda_j} {|\chi(D)|^{2}}\chi(D).$$

Let $\frac {|\chi(D_j)|^{2}-\lambda_j} {|\chi(D)|^{2}}=\alpha_j$, then $\chi(D_j)=\alpha_j\chi(D)$. We now conjugate  the equation (\ref{D}) and obtain that $$\alpha_j^{2}-\alpha_j-\frac {\lambda_j} {|\chi(D)|^{2}}=0.$$

Let $c=\frac {4} {|\chi(D)|^{2}}$. Then the solutions of this equation are
\begin{equation}
  \alpha_j^{+}=\frac 1 2\left(1+\sqrt{1+c\lambda_j} \right),\ \ \  \alpha_j^{-}=\frac 1 2\left(1-\sqrt{1+c\lambda_j}\right).
\end{equation}

It is obvious that $\alpha_j^{+}>1$ and $\alpha_j^{-}<0$. Since $\chi(D)\neq 0$ and $\chi(D)=\sum\limits_{j=1}^m\chi(D_j)=\chi(D)\sum\limits_{j=1}^m\alpha_j,$
we have $\sum\limits_{j=1}^m\alpha_j=1$, where $\alpha_j\in\{\alpha_j^{+},\alpha_j^{-}\}$. For later use, we state them in the following lemma.

\begin{lemma}\label{equ}
  If there exists a $(v,m;k_1,\dots,k_m;\lambda_1,\dots,\lambda_m)$-GSEDF in a finite abelian group $G$ of order $v$ such that $\sum\limits_{i=1}^{m}k_i< v$, then $\sum\limits_{j=1}^m\alpha_j=1$ holds for some $\alpha_j$, where $\alpha_j\in\{\alpha_j^{+},\alpha_j^{-}\}$.
\end{lemma}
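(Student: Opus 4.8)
The plan is to observe that the hypothesis $\sum_{i=1}^m k_i < v$ is precisely the condition $D \neq G$ that activates the apparatus assembled just before the statement. Since the sets $D_1,\dots,D_m$ are mutually disjoint, $D=\bigcup_{i=1}^m D_i$ has cardinality $|D|=\sum_{i=1}^m k_i < v = |G|$, so $D$ is a proper subset of $G$ and in particular $D\neq G$. By Theorem \ref{G^{N}} there then exists a non-principal character $\chi\in\widehat{G}$ with $\chi(D)\neq 0$, and I would fix this single $\chi$ for the rest of the argument.

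With $\chi$ fixed, I would pass from the defining relation (\ref{1.2}) to its equivalent form (\ref{j}) and apply $\chi$ to each of the $m$ equations. Because $\chi$ is non-principal we have $\chi(G)=0$, hence $\chi(G\setminus\{1\})=-1$; together with $\chi(D^{-1})=\overline{\chi(D)}$ and $\chi(D_j^{-1})=\overline{\chi(D_j)}$ this yields the scalar identity (\ref{D}), namely $\chi(D_j)\overline{\chi(D)}-|\chi(D_j)|^2=-\lambda_j$ for each $j$. Solving for $\chi(D_j)$ and setting $\alpha_j=(|\chi(D_j)|^2-\lambda_j)/|\chi(D)|^2$ gives $\chi(D_j)=\alpha_j\chi(D)$, where $\alpha_j$ is real.

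Substituting $\chi(D_j)=\alpha_j\chi(D)$ back into (\ref{D}) and using $|\chi(D_j)|^2=\alpha_j^2|\chi(D)|^2$ produces, after dividing by $|\chi(D)|^2$, the quadratic $\alpha_j^2-\alpha_j-\lambda_j/|\chi(D)|^2=0$. With $c=4/|\chi(D)|^2$ its two roots are exactly $\alpha_j^{+}$ and $\alpha_j^{-}$ as displayed, so each $\alpha_j\in\{\alpha_j^{+},\alpha_j^{-}\}$. The decisive point is that, since one and the same $\chi$ is used for every index $j$, the constant $c$ is common to all the $\alpha_j$, so these two-element choice sets are all defined relative to the same $|\chi(D)|^2$. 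Finally, as $D$ is the disjoint union of the $D_j$ we have $D=\sum_{j=1}^m D_j$ in $\mathbb{Z}[G]$, whence $\chi(D)=\sum_{j=1}^m\chi(D_j)=\chi(D)\sum_{j=1}^m\alpha_j$; dividing by $\chi(D)\neq 0$ gives $\sum_{j=1}^m\alpha_j=1$.

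I expect no serious obstacle here, since the entire computation is prefigured in the displayed equations preceding the statement. The two points worth stating carefully are, first, the reduction of the hypothesis $\sum_{i=1}^m k_i<v$ to $D\neq G$, which is what makes Theorem \ref{G^{N}} applicable; and second, the bookkeeping that a single character $\chi$ serves all indices simultaneously, so that the common constant $c$ is well defined and the linearity relation $\chi(D)=\sum_{j=1}^m\chi(D_j)$ collapses to $\sum_{j=1}^m\alpha_j=1$.
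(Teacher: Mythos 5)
Your proposal is correct and takes essentially the same route as the paper: the paper's own derivation (laid out in the displayed equations immediately preceding the lemma) proceeds exactly as you do, namely $\sum_{i=1}^m k_i<v$ forces $D\neq G$ by disjointness, Theorem \ref{G^{N}} supplies a single non-principal $\chi$ with $\chi(D)\neq 0$, applying $\chi$ to (\ref{j}) gives (\ref{D}), the real quadratic $\alpha_j^2-\alpha_j-\lambda_j/|\chi(D)|^2=0$ identifies $\alpha_j\in\{\alpha_j^{+},\alpha_j^{-}\}$ with the common constant $c=4/|\chi(D)|^2$, and dividing $\chi(D)=\chi(D)\sum_{j=1}^m\alpha_j$ by $\chi(D)\neq 0$ yields $\sum_{j=1}^m\alpha_j=1$. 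Your explicit bookkeeping that one fixed $\chi$ serves all indices $j$ (so that $c$ is well defined) is precisely what the paper's computation implicitly assumes.
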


\begin{theorem}\label{GSEDF3}
  There does not exist a $(v,3;k_1,k_2,k_3;\lambda_1,\lambda_2,\lambda_3)$-GSEDF if $\sum\limits_{i=1}^{3}k_i< v$.
\end{theorem}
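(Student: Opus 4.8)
The plan is to argue by contradiction, feeding the constraint of Lemma~\ref{equ} into the arithmetic of the roots $\alpha_j^{\pm}$ and then exploiting the ``triangle-type'' inequality $\lambda_1+\lambda_2>\lambda_3$ supplied by Lemma~\ref{GSEDFm}. First I would assume such a GSEDF exists and relabel so that $k_1\le k_2\le k_3$; by Lemma~\ref{GSEDFm} this gives $\lambda_1\le\lambda_2\le\lambda_3$ together with $\lambda_1+\lambda_2>\lambda_3$, whence $\lambda_i<\lambda_j+\lambda_k$ for every partition $\{i,j,k\}=\{1,2,3\}$ (the case $i=3$ being the binding one). Since $\sum_i k_i<v$, Lemma~\ref{equ} yields $\alpha_1+\alpha_2+\alpha_3=1$ with each $\alpha_j\in\{\alpha_j^{+},\alpha_j^{-}\}$. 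Writing $a_j=\sqrt{1+c\lambda_j}$ (so $a_j>1$, as $c>0$ and $\lambda_j>0$) and $\alpha_j=\tfrac12(1+\varepsilon_j a_j)$ with $\varepsilon_j\in\{+1,-1\}$, the relation $\sum_j\alpha_j=1$ reduces to the clean additive identity $\varepsilon_1 a_1+\varepsilon_2 a_2+\varepsilon_3 a_3=-1$.

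Then I would split into cases according to how many signs $\varepsilon_j$ equal $+1$. If all three signs agree, the left-hand side is either strictly positive (all $+$) or strictly below $-3$ (all $-$, since each $a_j>1$), so it cannot equal $-1$; these two cases are immediate. The two genuinely interesting cases are ``exactly one $+$'' and ``exactly two $+$'', and here the two halves of Lemma~\ref{ineq} come into play, one for each case.

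For the case of exactly one $+$, say $\varepsilon_i=+1$ and $\varepsilon_j=\varepsilon_k=-1$, the identity reads $a_j+a_k=1+a_i$. Applying Lemma~\ref{ineq}(2) with $n=2$ gives $a_j+a_k>1+\sqrt{1+c(\lambda_j+\lambda_k)}$, which forces $a_i>\sqrt{1+c(\lambda_j+\lambda_k)}$ and hence $\lambda_i>\lambda_j+\lambda_k$, contradicting $\lambda_i<\lambda_j+\lambda_k$. For the case of exactly two $+$, say $\varepsilon_i=\varepsilon_j=+1$ and $\varepsilon_k=-1$, the identity reads $a_k=1+a_i+a_j$, so in particular $a_k>a_i+a_j$; but Lemma~\ref{ineq}(1), applied with $x_1=1+c\lambda_i$, $x_2=1+c\lambda_j$, $x_3=1+c\lambda_k$ (whose hypothesis $x_1+x_2>x_3$ holds because $\lambda_i+\lambda_j>\lambda_k$), gives $a_i+a_j>a_k$, again a contradiction. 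As every sign pattern is excluded, no such GSEDF exists.

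I expect the main obstacle to be organizational rather than computational: one must correctly pass from the multiplicative root condition $\sum_j\alpha_j=1$ to the additive radical identity $\sum_j\varepsilon_j a_j=-1$, and then match each of the two mixed sign patterns to the appropriate part of Lemma~\ref{ineq}. The inequality $\lambda_1+\lambda_2>\lambda_3$ from Lemma~\ref{GSEDFm}(2) is precisely the input that closes both mixed cases, so the only real subtlety is verifying that the strict inequalities $\lambda_i<\lambda_j+\lambda_k$ hold for the distinguished index in each case; these follow at once from the ordering and part~(2) of that lemma.
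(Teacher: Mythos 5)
Your proposal is correct and follows essentially the same route as the paper: both invoke Lemma \ref{equ} to get $\alpha_1+\alpha_2+\alpha_3=1$, split into the four sign patterns, dismiss the all-plus and all-minus cases trivially, and kill the two mixed cases by combining Lemma \ref{ineq} with the triangle-type inequality $\lambda_i+\lambda_j>\lambda_k$ from Lemma \ref{GSEDFm}. The only cosmetic difference is that you normalize to $\varepsilon_1a_1+\varepsilon_2a_2+\varepsilon_3a_3=-1$ and use part (1) of Lemma \ref{ineq} in the two-plus case, where the paper works directly with the unsimplified equations and uses part (2) in both mixed cases; the substance is identical.
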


\begin{proof}
Without loss of generality, we may suppose that $k_1\leq k_2\leq k_3$. By Lemma \ref{GSEDFm} we obtain that $\lambda_1\leq\lambda_2\leq\lambda_3$ and $\lambda_i+\lambda_j>\lambda_k$, where $\{i, j, k\}=\{1,2,3\}$. By Lemma \ref{equ} we have $\alpha_1+\alpha_2+\alpha_3=1$, where $\alpha_j\in\{\alpha_j^{+},\alpha_j^{-}\}$, $j=1,2,3$. We distinguish the following four cases.
\begin{description}
  \item[Case 1:] $\alpha_i^{+}+\alpha_j^{+}+\alpha_k^{+}=1$.
  It is  impossible since $\alpha_i^{+}>1$, $1\le i\le 3$.

  \item[Case 2:] $\alpha_i^{-}+\alpha_j^{-}+\alpha_k^{-}=1$.
It is  impossible since $\alpha_i^{-}<0$, $1\le i\le 3$.

  \item[Case 3:] $\alpha_i^{+}+\alpha_j^{+}+\alpha_k^{-}=1$.
  Then we have
 \begin{equation}\label{4}
1+\sqrt{1+c\lambda_i}+\sqrt{1+c\lambda_j}-\sqrt{1+c\lambda_k}=0.
\end{equation}
By Lemma \ref{ineq} we have
$1+\sqrt{1+c\lambda_i}+\sqrt{1+c\lambda_j}>1+1+\sqrt{1+c\lambda_i+c\lambda_j}>2+\sqrt{1+c\lambda_k}$.
So  the left side of equation (\ref{4}) can not equal to 0.

  \item[Case 4:]  $\alpha_i^{+}+\alpha_j^{-}+\alpha_k^{-}=1$.
  Then we have
 \begin{equation}\label{6}
 1+\sqrt{1+c\lambda_i}-\sqrt{1+c\lambda_j}-\sqrt{1+c\lambda_k}=0.
\end{equation}
By Lemma  \ref{ineq} we know that
  $\sqrt{1+c\lambda_j}+\sqrt{1+c\lambda_k}>1+\sqrt{1+c(\lambda_j+\lambda_k)}>1+\sqrt{1+c\lambda_i}$.
  So  the left side of equation (\ref{6}) can not equal to 0.
\end{description}

Therefore, the conclusion follows as above.
\end{proof}

Now we  consider  the  existence of a $(v,3;k_1,k_2,k_3;\lambda_1,\lambda_2,\lambda_3)$-GSEDF with $\sum\limits_{i=1}^{3}k_i=v$.  It has been proved that there does not exist a  $(v,3,k,\lambda)$-SEDF. So we only need to consider the case $|\{k_1,k_2,k_3\}|\ge 2$.

 \begin{theorem}\label{k_2= k_3}
Suppose there exists a $(v,3;k_1,k_2,k_3;\lambda_1,\lambda_2,\lambda_3)$-GSEDF with $\sum\limits_{i=1}^{3}k_i=v$.  If $|\{k_1,k_2,k_3\}|=2$ or $1\in \{k_1,k_2,k_3\}$, then $v\equiv 3\pmod 4$, $(k_1,k_2,k_3)=(1,\frac {v-1} 2,\frac {v-1} 2)$ and $(\lambda_1,\lambda_2,\lambda_3)=(1, \frac {v+1} 4, \frac {v+1} 4)$.
\end{theorem}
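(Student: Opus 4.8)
The plan is to reduce everything to elementary divisibility consequences of the counting relation (\ref{def}). Throughout I assume without loss of generality that $k_1\le k_2\le k_3$. Since $\sum_i k_i=v$, relation (\ref{def}) reads $\lambda_i(v-1)=k_i(v-k_i)$, so that $\mu_i:=k_i-\lambda_i=\frac{k_i(k_i-1)}{v-1}$ is a nonnegative integer; equivalently $(v-1)\mid k_i(k_i-1)$ for $i=1,2,3$. I will also use the identity
\[
\lambda_j-\lambda_i=\frac{(k_j-k_i)(v-k_i-k_j)}{v-1}=\frac{(k_j-k_i)\,k_l}{v-1},
\]
valid whenever $\{i,j,l\}=\{1,2,3\}$ because $v-k_i-k_j=k_l$; in particular each such difference is automatically a nonnegative integer, recovering Lemma \ref{GSEDFm}(1) in the present situation.

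First I would dispose of the hypothesis $1\in\{k_1,k_2,k_3\}$, which (given the ordering) forces $k_1=1$ and hence $\lambda_1=\frac{1\cdot(v-1)}{v-1}=1$. Taking $(i,j,l)=(2,3,1)$ in the displayed identity gives $\lambda_3-\lambda_2=\frac{k_3-k_2}{v-1}$. Since $0\le k_3-k_2<k_2+k_3=v-1$, this nonnegative integer must vanish, so $k_2=k_3=\frac{v-1}{2}$. Substituting back yields $\lambda_2=\lambda_3=\frac{v+1}{4}$, whose integrality forces $v\equiv 3\pmod 4$; this is exactly the asserted conclusion.

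The substantive part is the hypothesis $|\{k_1,k_2,k_3\}|=2$, which splits into the sub-cases $k_1<k_2=k_3$ and $k_1=k_2<k_3$. The key idea is to merge the two divisibility conditions on $k_1$ and on the repeated part into a single congruence by reducing the larger part modulo $v-1$. In the sub-case $k_1<k_2=k_3$ one has $2k_2=v-k_1\equiv 1-k_1\pmod{v-1}$, whence $4k_2(k_2-1)\equiv k_1^2-1\pmod{v-1}$; combining $(v-1)\mid k_1^2-1$ with $(v-1)\mid k_1^2-k_1$ and subtracting gives $(v-1)\mid k_1-1$, and since $0\le k_1-1<v-1$ this forces $k_1=1$, reducing to the previous paragraph. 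In the sub-case $k_1=k_2<k_3$ one instead gets $k_3=v-2k_1\equiv 1-2k_1\pmod{v-1}$, hence $k_3(k_3-1)\equiv 2k_1(2k_1-1)\pmod{v-1}$; combining this with four times $(v-1)\mid k_1(k_1-1)$ and subtracting yields $(v-1)\mid 2k_1$, which is impossible because $0<2k_1<v-1$ (indeed $v-1=2k_1+k_3-1\ge 2k_1+1$ as $k_3\ge 2$). Thus this sub-case cannot occur, and in every admissible configuration the parameters are the claimed ones.

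The main obstacle I anticipate is precisely this sub-case analysis: the condition $(v-1)\mid k_1(k_1-1)$ alone only eliminates small values of $k_1$ (one checks $k_1\in\{2,3,4\}$ directly but the bound degrades for larger $k_1$), so the crux is to use the second divisibility condition simultaneously. The modular-reduction trick above is what makes this succeed, collapsing both conditions to a divisibility by $v-1$ of a quantity ($k_1-1$, respectively $2k_1$) that is provably nonnegative and strictly smaller than $v-1$. Everything else is routine substitution, together with the observation that $k_2+k_3=v-1$ once $k_1=1$.
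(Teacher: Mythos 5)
Your proof is correct and takes essentially the same route as the paper: both arguments use nothing but the counting relation and integrality of the $\lambda_i$, combining two divisibility-by-$(v-1)$ conditions to force a quantity strictly smaller than $v-1$ to vanish (your $v-1\mid k_1-1$ is the paper's $v-1\mid 2k_2$ rewritten via $2k_2=v-k_1$). The only difference is organizational: the paper renames so the repeated value is $k_2=k_3$ instead of ordering the $k_i$, which lets one computation cover both of your sub-cases, and it reduces the $1\in\{k_1,k_2,k_3\}$ case to the repeated-value case rather than the other way around.
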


\begin{proof}
If  $|\{k_1,k_2,k_3\}|=2$, without lose of generality, we may suppose $k_2=k_3$. So $k_1=v-k_2-k_3=v-2k_2$. Then from equation \eqref{def}, we have $(v-2k_2)(2k_2)=\lambda_1(v-1)$ and $\lambda_2(v-1)=k_2(v-k_2)$. Thus $v-1\mid (4k_2^{2}-2k_2)$ and $v-1\mid (k_2^{2}-k_2)$. So we obtain $v-1\mid 2k_2$. Therefore, $k_2=k_3=\frac {v-1} 2$ and $k_1=1$. So $(\lambda_1,\lambda_2,\lambda_3)=(1, \frac {v+1} 4, \frac {v+1} 4)$ and $v\equiv 3\pmod 4$.

If  $1\in \{k_1,k_2,k_3\}$,  without lose of generality, we may suppose $k_1=1$. Then we have $v-1=\lambda_1(v-1)$, $\lambda_2(v-1)=k_2(1+k_3)$ and $\lambda_3(v-1)=k_3(1+k_2)$. So we obtain $\lambda_1=1$ and $k_3-k_2 = (\lambda_3-\lambda_2)(v-1)$. Thus $v-1\mid (k_3-k_2)$ which leads to $k_2=k_3$. Then the conclusion follows from the above case.
\end{proof}

 \begin{theorem}\label{prime power}
  If  $v\equiv 3\pmod 4$ is a prime power, then there is a $(v,3;1,\frac {v-1} 2,\frac {v-1} 2;1, \frac {v+1} 4, \frac {v+1} 4)$-GSEDF.
 \end{theorem}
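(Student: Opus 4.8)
The plan is to realize the GSEDF explicitly inside the additive group of the finite field $\mathrm{GF}(v)$ and then invoke Theorem \ref{DS}, which reduces the existence of a GSEDF whose blocks partition $G$ to the single assertion that each block is a difference set with prescribed parameters. First I would set $G=(\mathrm{GF}(v),+)$ and take $D_1=\{0\}$, $D_2=Q$ the set of nonzero squares, and $D_3=N$ the set of non-squares, so that $|D_1|=1$, $|D_2|=|D_3|=\frac{v-1}{2}$, and $\{D_1,D_2,D_3\}$ partitions $G$ with $k_1+k_2+k_3=v$. Writing $v=4t+3$ confirms that $\frac{v-1}{2}=2t+1$ and $\frac{v+1}{4}=t+1$ are the correct integer block sizes and $\lambda$-values.

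By Theorem \ref{DS} it then suffices to check that each $D_i$ is a $(v,k_i,k_i-\lambda_i)$-DS. For $D_1=\{0\}$ this is immediate, since $\Delta(\{0\},\{0\})=\{0\}$ is trivially a $(v,1,0)$-DS, matching $k_1-\lambda_1=1-1=0$; alternatively one checks the GSEDF condition for $D_1$ directly, as $\Delta(\{0\},Q)\cup\Delta(\{0\},N)=(-Q)\cup(-N)=G\setminus\{0\}$. For $D_2$ and $D_3$ the target difference parameter is $k_i-\lambda_i=\frac{v-1}{2}-\frac{v+1}{4}=\frac{v-3}{4}=t$, so I need both $Q$ and $N$ to be $(v,\frac{v-1}{2},\frac{v-3}{4})$-DS.

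That $Q$ is such a difference set is precisely the classical Paley construction for $v\equiv 3\pmod 4$. For $N$ I would use that $v\equiv 3\pmod 4$ forces $-1$ to be a non-square, whence $N=-Q$; since negation $x\mapsto -x$ is an automorphism of $(\mathrm{GF}(v),+)$ and the difference-set property is invariant under group automorphisms, $N$ inherits the same $(v,\frac{v-1}{2},\frac{v-3}{4})$-DS parameters as $Q$. Equivalently, multiplication by any fixed non-square is an additive automorphism carrying $Q$ onto $N$, giving the same conclusion.

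With all three blocks verified to be difference sets of the required parameters, Theorem \ref{DS} yields that $\{\{0\},Q,N\}$ is a $(v,3;1,\frac{v-1}{2},\frac{v-1}{2};1,\frac{v+1}{4},\frac{v+1}{4})$-GSEDF, completing the construction; one may cross-check the three instances of the counting relation (\ref{def}) as a sanity test. The only genuine content is the Paley difference-set property together with the sign observation for $-1$, and I expect the step requiring the most care to be confirming that $N$ (and not merely $Q$) is a difference set with identical parameters — which is exactly where the hypothesis $v\equiv 3\pmod 4$ enters a second time through $-1$ being a non-square.
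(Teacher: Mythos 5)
Your proposal is correct and takes essentially the same route as the paper: both use the blocks $\{0\}$, the nonzero squares, and the non-squares of $\mathrm{GF}(v)$, reduce via Theorem \ref{DS} to checking the Paley difference-set property, and both must transfer that property from the squares to the non-squares. The only cosmetic difference is that you do this transfer by negation (using that $-1$ is a non-square when $v\equiv 3\pmod 4$), while the paper multiplies by a primitive element $g$ — the same additive-automorphism argument, which you also note as an alternative.
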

\begin{proof}
 Let $\mathbb{F}_v^{\ast}=<g>$, $D_1=\{0\}$, $D_2=\{g^{2i}: 0\leq i\leq \frac {v-1} 2\}$ and $D_3=\{g^{2i+1}: 0\leq i\leq \frac {v-1} 2\}$. Then $D_1$ and $D_2$ are respectively a $(v,1,0)$-DS and  a $(v, \frac {v-1} 2, \frac {v-3} 4)$-DS in $\mathbb{F}_v$ \cite{S}.  $D_3$ is also a $(v, \frac {v-1} 2, \frac {v-3} 4)$-DS in $\mathbb{F}_v$ since $D_3=gD_2$. It is easy to know $\{D_1, D_2, D_3\}$ is a partition of $\mathbb{F}_v$. Then by Theorem \ref{DS} $\{D_1, D_2, D_3\}$ is a $(v,3;1,\frac {v-1} 2,\frac {v-1} 2;1, \frac {v+1} 4, \frac {v+1} 4)$-GSEDF.
\end{proof}

By Theorem \ref{k_2= k_3} we know that a $(v,3;k_1,k_2,k_3;\lambda_1,\lambda_2,\lambda_3)$-GSEDF with $\sum\limits_{i=1}^{3}k_i=v$ can exist only when (1) $v\equiv 3\pmod 4$ and $(k_1,k_2,k_3,\lambda_1,\lambda_2,\lambda_3)=(1,\frac {v-1} 2,\frac {v-1} 2,1, \frac {v+1} 4, \frac {v+1} 4)$, or (2) $1<k_1<k_2<k_3$. Actually, the case $1<k_1<k_2<k_3$ can be enhanced to $\sqrt{v}<k_1<k_2<k_3$. Next we will discuss the existence of a $(v,3;k_1,k_2,k_3;\lambda_1,\lambda_2,\lambda_3)$-GSEDF for some small values of $v$.

For the first case $v\equiv 3\pmod 4$, it is easy to see $v=3$ is trivial.
We partition the first 24 possible values of $3<v<100$ into 3 sets $M_1, M_2, M_3$, where
$M_1= \{15, 35, 63, 99\}$,
$M_2= \{39, 51,55, 75, 87, 91, 95\}$
and $M_3= \{7, 11, 19, 23, 27, 31, 43, 47,  59, 67, 71, 79, 83\}$.
By Theorem \ref{prime power}, a $(v,3;1,\frac {v-1} 2,\frac {v-1} 2;1, \frac {v+1} 4, \frac {v+1} 4)$-GSEDF exists for each $v\in M_3$. By Theorem \ref{DS}, a $(v,3;1,\frac {v-1} 2,\frac {v-1} 2;1, \frac {v+1} 4, \frac {v+1} 4)$-GSEDF can not exist for any $v\in M_2$ since there is no $(v, \frac {v-1} 2, \frac {v-3} 4)$-DS \cite{B}. Thus, for this case, there are 4 possible values of $v\in M_1$ for which the existence of a $(v,3;1,\frac {v-1} 2,\frac {v-1} 2;1, \frac {v+1} 4, \frac {v+1} 4)$-GSEDF remains open.

For the second case $\sqrt{v}<k_1<k_2<k_3$, we list all the possible parameters $(v,k_1,k_2,k_3)$ of a $(v,3;k_1,k_2,k_3;\lambda_1,\lambda_2,\lambda_3)$-GSEDF with $v\leq 200$ and $\sum\limits_{i=1}^{3}k_i=v$ as follows:
\[ \begin{array}{rrrrrr}
(31,6,10,15), & (43,7,15,21),\ & (67,12,22,33),&  (71,15,21,35),&  (79,13,27,39),\\ (85,21,28,26), & (91,10,36,45),&  (103,18,34,51), &  (106,15,21,70),&  (111,11,45,55),\\  (115,19,39,57),& (127,28,36,63),&  (131,26,40,65),&  (133,12,33,88),&  (139,24,46,69),\\ (151,25,51,75),&  (155,22,56,77),&  (166,45,55,66),&  (171,35,51,85),&  (175,30,58,87),\\ (181,36,45,100),&  (183,14,78,91),&  (187,31,63,93),&  (191,20,76,95),&  (199,45,55,99).
\end{array}\]
By Theorem \ref{DS}, none of the above GSEDF exists in finite abelian group since at least one corresponding difference set of each GSEDF do not exist. So we only need to show the nonexistence of the corresponding difference sets. There are  no $(171,35,7)$-DS  \cite{La} and $(175,87,43)$-DS \cite{B, Kopilovich} in any abelian group, and all the other corresponding difference sets are ruled out in \cite{B}.

\section{New constructions for GSEDFs with $m=2$}

In this section, we will present some constructions for GSEDFs with $m=2$. We also establish a relationship between GSEDFs and graph decompositions.

\begin{lemma}\label{m1}
 There does not exist a $(v,m;k_1,\dots,k_m;1,\dots,1)$-GSEDF where $m\geq3$ and $k_i>1$ holds for some $i$, $1\leq  i\leq m$.
\end{lemma}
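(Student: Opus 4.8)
The plan is to exploit the counting relation (\ref{def}) specialized to $\lambda_1 = \dots = \lambda_m = 1$, which forces a rigid structure on the block sizes, and then to reduce the problem to a known statement about SEDFs. Writing $k = k_1 + \dots + k_m$, equation (\ref{def}) becomes $k_i(k - k_i) = v - 1$ for every $i$, so the quantity $k_i(k-k_i)$ takes the same value for all $i$. Note also that $v - 1 \geq 1$ forces $k_i \geq 1$ for each $i$. My first step is to extract from this the claim that all the $k_i$ must coincide.

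To prove this claim I would argue by contradiction: suppose $k_i \neq k_j$ for some pair $i \neq j$. Since the map $x \mapsto x(k-x)$ is symmetric about $x = k/2$ and strictly monotone on each side, the equality $k_i(k-k_i) = k_j(k-k_j)$ together with $k_i \neq k_j$ forces the two sizes to be reflections of one another, that is, $k_i + k_j = k$. But then $\sum_{l \neq i, j} k_l = k - k_i - k_j = 0$, which is impossible because $m \geq 3$ guarantees at least one further index $l$ with $k_l \geq 1$. Hence all the $k_i$ are equal, say to a common value $k_0$, and the GSEDF is in fact a $(v, m, k_0, 1)$-SEDF.

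The final step is to invoke the characterization of Paterson and Stinson \cite{PS}: a $(v, m, k_0, 1)$-SEDF exists if and only if $m = 2$ and $v = k_0^2 + 1$, or $m = v$ and $k_0 = 1$. Since we assume $m \geq 3$, only the second alternative can occur, forcing $k_0 = 1$ and hence $k_i = 1$ for every $i$. This contradicts the hypothesis that $k_i > 1$ for some $i$, which completes the argument.

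I expect the only delicate point to be the reduction showing that all block sizes agree; everything afterwards is an immediate consequence of the known classification. As a self-contained sanity check in the partition case $\sum_{i} k_i = v$, one has $v = m k_0$ and $(m-1)k_0^2 = v-1 = mk_0 - 1$, and the factorization $(m-1)k_0^2 - m k_0 + 1 = (k_0 - 1)\bigl((m-1)k_0 - 1\bigr)$ already forces $k_0 = 1$ since $\tfrac{1}{m-1} < 1$ for $m \geq 3$; however, ruling out the non-partition case cleanly really does seem to require the SEDF classification rather than the counting relation alone, so I would rely on \cite{PS} for that step.
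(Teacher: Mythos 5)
Your proof is correct, but it takes a genuinely different route from the paper's. The paper argues directly from the definition: summing the defining multiset equations over $i=2,\dots,m$ and using $\Delta(D_i,D_1)=-\Delta(D_1,D_i)$, it obtains $\bigcup_{2\leq i,j\leq m,\, j\neq i}\Delta(D_i,D_j)=(m-2)(G\setminus\{0\})$; since $k_1>1$ one can pick distinct $x,y\in D_1$, and since $m-2\geq 1$ the nonzero element $x-y$ equals some $u-v$ with $u\in D_i$, $v\in D_j$, $i\neq j$, $i,j\geq 2$; rearranging gives $u-x=v-y$, a nonzero element represented at least twice in $\bigcup_{l\neq 1}\Delta(D_l,D_1)$, contradicting $\lambda_1=1$. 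You instead use the counting relation (\ref{def}) to show that all block sizes coincide (your reflection argument $k_i+k_j=k$ forcing an empty third block is valid, and is a nice structural fact in its own right: $m\geq 3$ and $\lambda_1=\dots=\lambda_m=1$ force equal $k_i$), and then delegate the combinatorial core to the Paterson--Stinson classification of $(v,m,k,1)$-SEDFs. This is logically sound and not circular, since that classification is an external result of \cite{PS} not proved in this paper. The trade-off is one of self-containment: Lemma \ref{m1} feeds into Theorem \ref{3.7}, which is precisely the paper's generalization of the Paterson--Stinson theorem, so with your argument Theorem \ref{3.7} becomes a corollary of the very result it generalizes, whereas the paper's elementary two-line counting argument reproves that nonexistence from scratch. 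Your route is shorter modulo the citation; the paper's buys independence from \cite{PS} and keeps the combinatorial mechanism visible.
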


\begin{proof}
Without lose of generality, we suppose $\{D_1, D_2,\dots, D_m\}$ is a $(v, m;k_1,\dots,k_m;1,\dots,1)$-GSEDF in $G$ with $m\geq 3$ and $k_1>1$.
Then we have
\begin{equation}\label{m-2}
  \bigcup\limits_{2\leq i,j\leq m\atop j\neq i}\Delta(D_i,D_j)=(m-2)(G\setminus \{0\}).
\end{equation}
Let $x$, $y\in D_1$ and $x\neq y$. Since $m\geq 3$ and from \eqref{m-2}, there exist $u\in D_i$ and $v\in D_j$, where $i$, $j>1$ and $i\neq j$ such that $u-v=x-y$. That is $u-x=v-y$, it is a contradiction to $\lambda_1=1$.
\end{proof}

\begin{theorem}\label{3.7}
 There exists a $(v,m;k_1,\dots,k_m;1,\dots,1)$-GSEDF if and only if $m=2$ and $v=k_1k_2+1$, or $k_i=1$ for $1\leq i\leq m$ and $v=m$.
\end{theorem}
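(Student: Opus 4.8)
The plan is to prove the two directions of the biconditional separately: first necessity, which will rest almost entirely on Lemma~\ref{m1} together with the counting relation~\eqref{def}, and then sufficiency, which requires exhibiting two families of explicit examples. Throughout, recall that $m\ge 2$ by the definition of a GSEDF.

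For necessity, suppose a $(v,m;k_1,\dots,k_m;1,\dots,1)$-GSEDF exists. If $m=2$, then~\eqref{def} with $i=1$ gives $k_1k_2=\lambda_1(v-1)=v-1$, so $v=k_1k_2+1$, which is the first alternative. If instead $m\ge 3$, then Lemma~\ref{m1} forbids $k_i>1$ for every $i$, forcing $k_1=\dots=k_m=1$; substituting into~\eqref{def} yields $m-1=\lambda_i(v-1)=v-1$, hence $v=m$, which is the second alternative. Thus one direction follows with essentially no computation beyond~\eqref{def}.

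For sufficiency I would treat each alternative by a construction. When $k_1=\dots=k_m=1$ and $v=m$, take any abelian group $G$ of order $v$ and let $D_1,\dots,D_m$ be the $v$ singletons, so $\{D_1,\dots,D_m\}$ partitions $G$; for $D_i=\{a_i\}$ one has $\bigcup_{j\neq i}\Delta(D_i,D_j)=\{a_i-a_j:j\neq i\}=G\setminus\{0\}$ since $\{a_j:j\neq i\}=G\setminus\{a_i\}$, so each $\lambda_i=1$ (equivalently, each singleton is a $(v,1,0)$-DS and Theorem~\ref{DS} applies). When $m=2$ and $v=k_1k_2+1$, I would work in $G=\mathbb{Z}_{k_1k_2+1}$ and set
\[
D_1=\{0,1,\dots,k_1-1\},\qquad D_2=\{k_1,2k_1,\dots,k_2k_1\}.
\]
These are disjoint because all listed residues lie in $\{0,1,\dots,k_1k_2\}$, are distinct modulo $k_1k_2+1$, and $D_1,D_2$ occupy the disjoint ranges $\{0,\dots,k_1-1\}$ and $\{k_1,\dots,k_1k_2\}$. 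To verify the GSEDF property I would compute $\Delta(D_2,D_1)=\{jk_1-x:1\le j\le k_2,\ 0\le x\le k_1-1\}$; for each fixed $j$ the inner set is the block $\{(j-1)k_1+1,\dots,jk_1\}$, and the union of these consecutive blocks over $j=1,\dots,k_2$ is exactly $\{1,2,\dots,k_1k_2\}=G\setminus\{0\}$, each element occurring once. Since for $m=2$ we have $\Delta(D_1,D_2)=-\Delta(D_2,D_1)$ and $G\setminus\{0\}$ is closed under negation, both equations in~\eqref{1.2} hold with $\lambda_1=\lambda_2=1$.

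Given the availability of Lemma~\ref{m1}, the necessity direction presents no real obstacle, so the only substantive step is the $m=2$ construction. The crux there is confirming that the multiset $\Delta(D_2,D_1)$ covers each nonzero element of $\mathbb{Z}_{k_1k_2+1}$ exactly once, with no repetitions; I expect this to follow cleanly from the decomposition of $\{1,\dots,k_1k_2\}$ into the $k_2$ consecutive length-$k_1$ blocks described above, so that no serious difficulty remains.
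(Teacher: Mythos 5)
Your proof is correct and takes essentially the same route as the paper: necessity via Lemma~\ref{m1} combined with the counting relation~\eqref{def}, and sufficiency via the trivial singleton partition together with exactly the construction the paper records as Theorem~\ref{C1} ($D_1=\{0,1,\dots,a-1\}$, $D_2=\{a,2a,\dots,ba\}$ in $\mathbb{Z}_{ab+1}$). The only difference is organizational: the paper's proof of this theorem carries out just the necessity computation and defers the explicit constructions, whereas you verify them in place.
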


\begin{proof}
By Lemma \ref{m1}, we only need to consider $m=2$ or $k_i=1$ for $1\leq i\leq m$. When $m=2$, from $1\times(v-1)=k_1k_2$, we have $v=k_1k_2+1$. When $k_i=1$ for $1\leq i\leq m$, from $\lambda_i(v-1)=(m-1)k_i^{2}$, we have $v=m$. So the conclusion follows.
\end{proof}

\begin{theorem}\label{equal}
 If there is a  $(v,2;k_1,k_2;\lambda_1,\lambda_2)$-GSEDF, then $\lambda_1=\lambda_2$. Further, if $k_1$ and $k_2$ are primes, then $\lambda_1=\lambda_2=1$.
\end{theorem}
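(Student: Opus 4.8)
The plan is to use the counting relation \eqref{def} specialized to $m=2$. For $m=2$ the two equations from \eqref{def} read $k_1 k_2 = \lambda_1(v-1)$ and $k_2 k_1 = \lambda_2(v-1)$, since when $i=1$ the only index $j\neq i$ is $j=2$, and when $i=2$ the only index $j\neq i$ is $j=1$. Both right-hand sides equal the same product $k_1 k_2$, so immediately $\lambda_1(v-1)=\lambda_2(v-1)$, and because $v-1>0$ we conclude $\lambda_1=\lambda_2$. This first assertion is therefore essentially immediate from the parameter arithmetic and requires no character theory at all.

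For the second assertion, write $\lambda=\lambda_1=\lambda_2$, so that $k_1 k_2=\lambda(v-1)$. Now suppose $k_1$ and $k_2$ are both primes. The strategy is to invoke the divisibility/nonexistence machinery already available. One clean route is to apply Theorem \ref{3.7}: if I can show $\lambda=1$ leads back to the known classification, the result holds, but the real content is ruling out $\lambda>1$. I would instead argue via the established fact that a $(v,2,p,\lambda)$-SEDF with $p$ prime forces $\lambda=1$ (this is the Huczynska--Paterson result cited in the introduction). The subtlety is that here we have a possibly non-equal pair $(k_1,k_2)$ rather than an SEDF, so I would need to either reduce to the SEDF case when $k_1=k_2$ is prime, or argue the general primes case directly. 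When $k_1=k_2=p$, the configuration is literally a $(v,2,p,\lambda)$-SEDF and the cited theorem gives $\lambda=1$. When $k_1\neq k_2$, I expect the argument to proceed by examining $k_1 k_2=\lambda(v-1)$ together with a character-theoretic or algebraic-integer constraint analogous to the one used earlier in the excerpt (the quadratic for $\alpha_j$), pinning down $\lambda$.

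The main obstacle I anticipate is the case of distinct primes $k_1\neq k_2$, where the SEDF result does not directly apply. Here I would set up the character equation as in the earlier discussion: applying a nonprincipal character $\chi$ to the group-ring identity \eqref{1.2}, one gets for $m=2$ the relation $\chi(D_1)\overline{\chi(D_2)}=-\lambda$ (and its conjugate), whence $|\chi(D_1)|^2 |\chi(D_2)|^2=\lambda^2$. Combining this with the fact that $\chi(D_1)$ and $\chi(D_2)$ are algebraic integers in a cyclotomic field, and using that $k_1,k_2$ are prime, I would extract a divisibility contradiction unless $\lambda=1$. The delicate point will be handling the case $D=D_1\cup D_2=G$ (where no nonprincipal character need satisfy $\chi(D)\neq 0$) separately from the case $D\neq G$, and ensuring the algebraic-integer argument is rigorous; this is where I would spend the most care, likely mirroring the normalization and conjugation steps already carried out just before Lemma \ref{equ}.
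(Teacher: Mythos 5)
Your first assertion is handled exactly as the paper does: writing the counting relation \eqref{def} for $i=1$ and $i=2$ gives $\lambda_1(v-1)=k_1k_2=\lambda_2(v-1)$, and $v-1>0$ forces $\lambda_1=\lambda_2$. That part is complete and correct.

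The gap is in the second assertion. For $k_1=k_2=p$ you can indeed quote the cited Huczynska--Paterson result, but for distinct primes $k_1\neq k_2$ you only offer a plan (apply a nonprincipal character, get $|\chi(D_1)|^2|\chi(D_2)|^2=\lambda^2$, then ``extract a divisibility contradiction''), and you yourself flag that you do not know how to finish it; this is not a proof. Moreover, the route is unpromising: for $m=2$ the relation $\chi(D_1)\overline{\chi(D_2)}=-\lambda$ holds for every nonprincipal character of \emph{every} $(v,2;k_1,k_2;\lambda,\lambda)$-GSEDF, including the paper's existing examples with $\lambda>1$ (e.g.\ the $(16,2;5,9;3,3)$-GSEDF of Theorem \ref{g16}), so no contradiction can be wrung from that relation alone; primality has to enter arithmetically. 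The missing idea is elementary: from $k_1k_2=\lambda(v-1)$ we get $\lambda\mid k_1k_2$, and since $k_1,k_2$ are primes the only divisors of $k_1k_2$ are $1$, $k_1$, $k_2$, $k_1k_2$. On the other hand, disjointness of $D_1$ and $D_2$ gives $v\ge k_1+k_2$, so $v-1\ge k_1+k_2-1>k_2$ (using $k_1\ge 2$), whence $\lambda=\frac{k_1k_2}{v-1}<k_1$, and symmetrically $\lambda<k_2$. A divisor of $k_1k_2$ smaller than both $k_1$ and $k_2$ must equal $1$, so $\lambda_1=\lambda_2=1$. This short divisibility argument is the paper's entire proof of the second claim; it needs no character theory and no case distinction between $D=G$ and $D\neq G$.
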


\begin{proof}
 By the definition of GSEDF, we have
$\lambda_1(v-1)=k_1k_2=
\lambda_2(v-1)$.
 Then $\lambda_1=\lambda_2$ and $\lambda_1| k_1k_2$. And $\lambda_1<k_1$ and $\lambda_1<k_2$ since $k_1$ and $k_2$ are primes. Then  we obtain that $\lambda_1=\lambda_2=1$.
\end{proof}

\begin{theorem}\label{C1}
 There exists a $(ab+1,2;a,b;1,1)$-GSEDF.
 \end{theorem}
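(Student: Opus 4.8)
The plan is to give an explicit construction in the cyclic group $G=\Z_{ab+1}$. First I would observe that for $m=2$ the two defining multiset equations collapse to a single requirement. They read $\Delta(D_1,D_2)=\lambda_1(G\setminus\{0\})$ and $\Delta(D_2,D_1)=\lambda_2(G\setminus\{0\})$; but $\Delta(D_2,D_1)=-\Delta(D_1,D_2)$ and the map $g\mapsto -g$ is an automorphism of $G$ fixing $0$, so once $\Delta(D_1,D_2)$ equals $G\setminus\{0\}$ with multiplicity one, the second equation follows automatically with the same multiplicity. By the counting relation (\ref{def}) (equivalently Theorem \ref{equal}) the parameters are forced to be $\lambda_1=\lambda_2=1$ when $v=ab+1$, so it suffices to produce disjoint sets $D_1,D_2$ with $|D_1|=a$, $|D_2|=b$ whose $ab$ differences $\{x-y:x\in D_1,\,y\in D_2\}$ hit each nonzero element of $G$ exactly once.

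Next I would exhibit the sets: take $D_1=\{0,1,\dots,a-1\}$ and $D_2=\{a,2a,\dots,ab\}$ in $\Z_{ab+1}$, and verify the claim in three short steps. (i) Sizes and disjointness: the $b$ multiples $a,2a,\dots,ab$ are distinct and already lie in $[a,ab]\subseteq\{0,1,\dots,ab\}$ (no reduction needed), so $|D_2|=b$; since every element of $D_2$ is at least $a$ while $D_1\subseteq\{0,\dots,a-1\}$, the two sets are disjoint. (ii) The difference tiling: for each fixed $i$ with $1\le i\le b$, the block $D_1-ia=\{-ia,1-ia,\dots,(a-1)-ia\}$ consists of $a$ consecutive residues, which modulo $ab+1$ equals the interval $[a(b-i)+1,\,a(b-i+1)]$. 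As $i$ runs from $1$ to $b$ these blocks are $[a(b-1)+1,ab],\,[a(b-2)+1,a(b-1)],\,\dots,\,[1,a]$, and they partition $\{1,2,\dots,ab\}=G\setminus\{0\}$. Hence $\Delta(D_1,D_2)=G\setminus\{0\}$ with each nonzero element covered exactly once. (iii) Conclude via the opening observation that $\Delta(D_2,D_1)=-(G\setminus\{0\})=G\setminus\{0\}$ as well, so $\{D_1,D_2\}$ is a $(ab+1,2;a,b;1,1)$-GSEDF.

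I do not anticipate a serious obstacle: the entire content is choosing the two sets correctly, and the argument is elementary. The one place to be careful — the main step — is the modular bookkeeping in part (ii), namely checking that the $b$ translated length-$a$ blocks reduce mod $ab+1$ to consecutive, disjoint arcs exactly exhausting $\{1,\dots,ab\}$ and in particular never wrapping around onto $0$. This rests on $\gcd(a,ab+1)=1$ (which keeps the multiples $ia$ distinct) and on the identity $-ia\equiv a(b-i)+1 \pmod{ab+1}$, both immediate. A quick sanity check with $a=b=2$, $v=5$ gives $D_1=\{0,1\}$, $D_2=\{2,4\}$ and differences $\{3,1,4,2\}=\Z_5\setminus\{0\}$, confirming the pattern.
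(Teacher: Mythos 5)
Your proposal is correct and follows essentially the same route as the paper: the paper's proof uses exactly the same construction, $D_1=\{0,1,\dots,a-1\}$ and $D_2=\{a,2a,\dots,ba\}$ in $\mathbb{Z}_{ab+1}$, and simply asserts that $\Delta(D_1,D_2)=\Delta(D_2,D_1)=\mathbb{Z}_{ab+1}\setminus\{0\}$ is easy to check. You have merely written out the verification (the tiling of $\{1,\dots,ab\}$ by the translated blocks and the symmetry under negation) that the paper leaves to the reader.
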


\begin{proof}
Let $G=\mathbb{Z}_{ab+1}$, $D_1 =\{0, 1,\ldots, a-1\}$, and  $D_2 =\{a, 2a, \ldots, ba\}$. It is easy  to check that
$\Delta(D_1,D_2)=\Delta(D_2,D_1)= \mathbb{Z}_{ab+1}\setminus \{0\}$. Then $\{D_1,D_2\}$ is a $(ab+1,2;a,b;1,1)$-GSEDF in $G$.
\end{proof}

By Theorems~\ref{3.7},~\ref{equal}, and \ref{C1} the existence of a $(v,2;k_1,k_2;1,1)$-GSEDF has been completely determined.  Now we continue to consider the existence of a $(v,2;k_1,k_2;\lambda,\lambda)$-GSEDF for $\lambda>1$. The following construction is the first recursive construction for GSEDFs.

\begin{construction}\label{construction1}
Let $v>1$, $t>1$ and $v\equiv t\equiv 1\pmod 2$.   If there is a $(v,2;2\lambda,\frac {v-1} 2;\lambda,\lambda)$-GSEDF, then there is a  $(vt,2;4\lambda,\frac {vt-1} 2;2\lambda,2\lambda)$-GSEDF.
\end{construction}

\begin{proof} Suppose $\{D_1,D_2\}$ is a $(v,2;2\lambda,\frac {v-1} 2;\lambda,\lambda)$-GSEDF in the group $G$ with identity $0$, where $|D_1|=2\lambda$, $|D_2|=\frac{v-1}{2}$, and  $\Delta(D_1,D_2)=\Delta(D_2,D_1)=\lambda(G\setminus \{0\})$. Let $$D_1'=\{(x,j):x\in D_1, j=0,1\},$$ $$D_2'=\{(x,0), (x,2i-1): x\in D_2, 1\leq i\leq \frac {t-1} 2\}\cup \{(y, 2i): y\in G\setminus D_2, 1\leq i\leq \frac {t-1} 2\}.$$  We will show that $\{D_1',D_2'\}$ is a  $(vt,2;4\lambda,\frac {vt-1} 2;2\lambda,2\lambda)$-GSEDF
in $G\times \mathbb{Z}_{t}$.

It is easy to check that $D_1'\subset G\times \mathbb{Z}_{t}$, $D_2'\subset G\times \mathbb{Z}_{t}$, $D_1'\cap D_2'=\emptyset$, $|D_1'|=2|D_1|=4\lambda$ and  $|D_2'|=(\frac {t-1} 2+1)|D_2|+(v-|D_2|)\frac {t-1} 2=\frac {vt-1} 2$. Also we have $\Delta(D_1',D_2')=\Delta_1\cup \Delta_2\cup \Delta_3$, where $$\Delta_1=\{(x,0),(x,-1):x\in \Delta(D_2,D_1)\},$$  $$\Delta_2=\{(x,2i-1),(x,2i-2):x\in \Delta(D_2,D_1), 1\leq i\leq \frac {t-1} 2 \},$$  $$\Delta_3=\{(y,2i),(y,2i-1):y\in \Delta(G\setminus D_2,D_1), 1\leq i\leq \frac {t-1} 2 \}.$$

Now we prove $\Delta(D_1',D_2')=2\lambda((G\times \mathbb{Z}_{t})\setminus \{(0,0)\})$.
In other words, we need to prove that every element $(a,b)\in (G\times \mathbb{Z}_{t})\setminus \{(0,0)\}$ appears exactly $2\lambda$ times in $\Delta(D_1',D_2')$. It is sufficient to prove that every element $(a,b)\in (G\times \mathbb{Z}_{t})\setminus \{(0,0)\}$ appears at least $2\lambda$ times since $|D_1'||D_2'|=2\lambda(vt-1)$.
We distinguish the following 3 cases.
\begin{description}
  \item[Case 1:] $b=0$. In this case, $a\not=0$ and $(a,0)$ appears $\lambda$ times in $\Delta_1$ and $\lambda$ times in $\Delta_2$ (let $i=1$), respectively, since $\Delta(D_2,D_1)=\lambda(G\setminus \{0\})$.
  \item[Case 2:] $b\equiv 1\pmod 2$. In this case, note that $\Delta(G\setminus D_2,D_1)\cup \Delta(D_2,D_1)=\Delta(G,D_1)=|D_1|G=2\lambda G$, then we know $(a,b)$ appears $\lambda$ times in $\Delta_2$ and $\lambda$ times in $\Delta_3$, respectively.
  \item[Case 3:] $b\equiv 0\pmod 2$ and $b\not=0$. When $2\le b\le t-3$, similar to Case 2, $(a,b)$ appears $\lambda$ times in $\Delta_2$ and $\lambda$ times in $\Delta_3$, respectively. When $b=t-1$, $(a,b)$ appears $\lambda$ times in $\Delta_1$ and $\lambda$ times in $\Delta_3$, respectively.
  \end{description}
So the conclusion follows.
\end{proof}

\begin{theorem}\label{power}
(1)(\cite{HP})  For any prime power  $q\equiv1\pmod 4$, there is a $(q, 2;\frac{q-1}{2},\frac{q-1}{2};\frac{q-1}{4},$ $\frac{q-1}{4})$-GSEDF.\\
(2)(\cite{WYFF}) For any prime power $q\equiv3\pmod4$, there is a  $(q,2;\frac{q-1}{2},\frac{q+1}{2};\frac{q+1}{4},\frac{q+1}{4})$-GSEDF.
\end{theorem}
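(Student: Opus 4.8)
The plan is to realize both families inside the additive group of the finite field $\mathbb{F}_q$ using quadratic residues, and then to reduce the GSEDF property to the classical difference/partial-difference properties of the Paley construction via Theorems~\ref{DS} and~\ref{PDS}. Write $C_0$ for the set of nonzero squares and $C_1$ for the set of nonsquares in $\mathbb{F}_q$; each has size $\frac{q-1}{2}$, they are disjoint, and $\{C_0,C_1\}$ partitions $\mathbb{F}_q\setminus\{0\}$.

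For part (1), where $q\equiv 1\pmod 4$, I would take $D_1=C_0$ and $D_2=C_1$. Since $\{D_1,D_2\}$ is a partition of $\mathbb{F}_q\setminus\{0\}$, Theorem~\ref{PDS} reduces the claim to showing that each $C_i$ is a $(q,\frac{q-1}{2},\frac{q-5}{4},\frac{q-1}{4})$-PDS, because $k_i-\lambda_i-1=\frac{q-1}{2}-\frac{q-1}{4}-1=\frac{q-5}{4}$ and $k_i-\lambda_i=\frac{q-1}{4}$. This is exactly the Paley partial difference set. Multiplication by a fixed nonsquare is an automorphism of the additive group carrying $C_0$ onto $C_1$, so it suffices to verify the PDS property for $C_0$ alone. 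For part (2), where $q\equiv 3\pmod 4$, I would instead take $D_1=C_0$ and $D_2=\mathbb{F}_q\setminus C_0=C_1\cup\{0\}$, of sizes $\frac{q-1}{2}$ and $\frac{q+1}{2}$; now $\{D_1,D_2\}$ partitions all of $\mathbb{F}_q$. Theorem~\ref{DS} reduces the claim to showing $D_1$ is a $(q,\frac{q-1}{2},\frac{q-3}{4})$-DS and $D_2$ is a $(q,\frac{q+1}{2},\frac{q+1}{4})$-DS, since $k_1-\lambda_1=\frac{q-3}{4}$ and $k_2-\lambda_2=\frac{q+1}{4}$. The first is the classical Paley (skew-Hadamard) difference set, and the second is simply its complement: the standard fact that the complement of a $(v,k,\lambda)$-DS is a $(v,v-k,v-2k+\lambda)$-DS yields $\bigl(q,\tfrac{q+1}{2},\tfrac{q+1}{4}\bigr)$ for $D_2$ once $D_1$ is established.

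Thus everything hinges on the Paley property of $C_0$, and this is the step I expect to be the main obstacle. I would prove it uniformly by characters. For a nonprincipal additive character $\chi$ of $\mathbb{F}_q$ one computes $\chi(C_0)=\frac12\bigl(-1+g\bigr)$, where $g=\sum_{x\in\mathbb{F}_q}\eta(x)\chi(x)$ is the quadratic Gauss sum and $\eta$ is the quadratic character, using $\sum_{x\neq 0}\chi(x)=-1$. The crucial input is the evaluation $g^2=\eta(-1)\,q$ together with $|g|^2=q$. When $q\equiv 1\pmod 4$ we have $\eta(-1)=1$ (so $-1$ is a square and $C_0$ is symmetric), hence $g=\pm\sqrt q$ is real and $\chi(C_0)\in\bigl\{\tfrac{-1+\sqrt q}{2},\tfrac{-1-\sqrt q}{2}\bigr\}$ takes only these two real values; this is precisely the two-eigenvalue condition defining the PDS with the parameters above. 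When $q\equiv 3\pmod 4$ we have $\eta(-1)=-1$, so $g$ is purely imaginary, whence $|\chi(C_0)|^2=\frac14\bigl(1+|g|^2\bigr)=\frac{q+1}{4}=k_1-\lambda_1$ for every nonprincipal $\chi$, which is exactly the condition for $C_0$ to be the claimed difference set.

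The sign distinction $\eta(-1)=\pm1$, equivalently whether $-1$ is a square in $\mathbb{F}_q$, is what separates the two parts and is where the arithmetic of $q\bmod 4$ enters; once the Gauss-sum evaluation is in hand, the counting relation~\eqref{def} and the reductions through Theorems~\ref{DS} and~\ref{PDS} finish both cases. A direct alternative to the Gauss-sum argument is to compute the cyclotomic numbers of order $2$, but the character computation is cleaner and treats both residue classes in a single stroke.
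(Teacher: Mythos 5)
Your proof is correct, but there is nothing in the paper to compare it against line by line: Theorem~\ref{power} is stated as a quotation of known results (part (1) from \cite{HP}, part (2) from \cite{WYFF}), and the paper supplies no proof of its own. Your route is, however, exactly the style the paper uses for its neighbouring results: Theorems~\ref{prime power} and~\ref{q+2} likewise reduce a GSEDF claim to (partial) difference sets through Theorems~\ref{DS} and~\ref{PDS}, and then cite the Paley and twin-prime-power difference sets from \cite{S} and \cite{SS}. Where you go beyond that strategy is in proving the Paley DS/PDS property itself via the Gauss-sum evaluation $g^2=\eta(-1)\,q$ rather than quoting it; this is sound, and it correctly identifies $\eta(-1)=\pm1$ as the source of the dichotomy between $q\equiv1\pmod 4$ and $q\equiv3\pmod4$, so your argument is a legitimate self-contained alternative to citation. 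Two minor repairs are needed. First, in part (2) the final chain of equalities should end $|\chi(C_0)|^2=\frac{q+1}{4}=k_1-\lambda_{\mathrm{DS}}=\lambda_1$, where $\lambda_{\mathrm{DS}}=\frac{q-3}{4}$ is the difference-set parameter; as written, the claim $\frac{q+1}{4}=k_1-\lambda_1$ contradicts your own earlier (correct) computation $k_1-\lambda_1=\frac{q-3}{4}$, though the surrounding reasoning makes the intended meaning clear. Second, in part (1) the passage from the fact that $\chi(C_0)$ takes only the two values $\frac{-1\pm\sqrt q}{2}$ to the PDS conclusion invokes the standard eigenvalue characterization of regular partial difference sets, which requires $0\notin C_0$ and $C_0=-C_0$; you note the symmetry, but it would strengthen the write-up to state explicitly that this hypothesis is what licenses the equivalence, and to recover the parameters $\lambda-\mu=-1$ and $k-\mu=\frac{q-1}{4}$ from the sum and product of the two eigenvalues.
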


\begin{theorem}\label{2n}
  Let $v=p_1p_2\dots p_n$, $p_i>1$, $1\le i\le n$, where $p_1, p_2,\dots,p_n$ are odd integers. Then there exists a $(v, 2;2^{n},\frac {v-1} 2;2^{n-1},2^{n-1})$-GSEDF.
\end{theorem}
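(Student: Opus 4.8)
The plan is to argue by induction on $n$. The crucial observation is that the target family $(v,2;2^n,\frac{v-1}2;2^{n-1},2^{n-1})$-GSEDF has exactly the shape $(v,2;2\lambda,\frac{v-1}2;\lambda,\lambda)$ with $\lambda=2^{n-1}$, which is precisely the type of family that Construction~\ref{construction1} both consumes and produces: feeding it a family with parameter $\lambda$ over a group of odd order, and multiplying the order by an odd integer $t$, returns a family with parameter $2\lambda$. Thus the induction should seed at $n=1$ via Theorem~\ref{C1} and then repeatedly apply Construction~\ref{construction1}, one factor $p_i$ at a time, doubling $\lambda$ from $2^{0}$ up to $2^{n-1}$.

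For the base case $n=1$ I would apply Theorem~\ref{C1} with $a=2$ and $b=\frac{v-1}2$. Since $v=p_1$ is odd and greater than $1$, $b$ is a positive integer and $ab+1=2\cdot\frac{v-1}2+1=v$, so Theorem~\ref{C1} supplies a $(v,2;2,\frac{v-1}2;1,1)$-GSEDF, which is the claimed family for $n=1$. For the inductive step, assume $n\ge 2$ and the result for $n-1$, and set $v'=p_1\cdots p_{n-1}$ so that $v=v'p_n$. By hypothesis there is a $(v',2;2^{n-1},\frac{v'-1}2;2^{n-2},2^{n-2})$-GSEDF, i.e.\ a family of the form $(v',2;2\lambda,\frac{v'-1}2;\lambda,\lambda)$ with $\lambda=2^{n-2}$. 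I would then invoke Construction~\ref{construction1} with $t=p_n$; the required hypotheses $v'>1$, $t>1$ and $v'\equiv t\equiv 1\pmod 2$ all hold because $v'$ and $p_n$ are products of odd integers exceeding $1$. The construction outputs a $(v'p_n,2;4\lambda,\frac{v'p_n-1}2;2\lambda,2\lambda)$-GSEDF, and substituting $\lambda=2^{n-2}$ gives $4\lambda=2^n$, $2\lambda=2^{n-1}$ and $v'p_n=v$, which is exactly the desired family.

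I do not anticipate a substantive obstacle: the argument is a direct bookkeeping induction with no new combinatorial content beyond the two black boxes already available. The only points demanding care are (i) confirming that the odd-order and size hypotheses of Construction~\ref{construction1} are satisfied at every stage, which is immediate from each $p_i$ being odd and larger than $1$, and (ii) keeping the exponent bookkeeping straight, so that the parameter $\lambda$ indeed climbs as $2^{n-2}\mapsto 2^{n-1}$ and the leading parameter as $2^{n-1}\mapsto 2^n$, and the parameters of the output match the claim verbatim.
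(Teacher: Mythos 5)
Your proposal is correct and takes essentially the same route as the paper: the paper also seeds the induction with the $(p_1,2;2,\frac{p_1-1}{2};1,1)$-GSEDF from Theorem~\ref{C1} and then applies Construction~\ref{construction1} once per odd factor, doubling $\lambda$ at each stage. Your write-up merely makes explicit the parameter bookkeeping and the verification of the hypotheses of Construction~\ref{construction1} that the paper's two-line proof leaves implicit.
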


\begin{proof}
 There is a $(p_1,2;2,\frac {p_1-1} 2;1,1)$-GSEDF by Theorem \ref{C1}. Applying Construction~\ref{construction1}, we obtain a $(v, 2;2^{n},\frac {v-1} 2;2^{n-1},2^{n-1})$-GSEDF by induction on $n$.
\end{proof}

\begin{theorem}\label{g16}
 There exists a $(16,2;5,9;3,3)$-GSEDF.
 \end{theorem}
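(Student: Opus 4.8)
The plan is to realize the family explicitly: I would exhibit two disjoint subsets $D_1,D_2$ of a suitable abelian group $G$ of order $16$, with $|D_1|=5$ and $|D_2|=9$, and verify the single multiset identity $\Delta(D_1,D_2)=3(G\setminus\{0\})$. The counting relation \eqref{def} already holds, since $k_1k_2=5\cdot 9=45=3(16-1)$, which is exactly why one looks for $\lambda_1=\lambda_2=3$ (consistent with Theorem~\ref{equal}). Because $m=2$, the remaining requirement is automatic: $\Delta(D_2,D_1)$ is the negation of $\Delta(D_1,D_2)$ as a multiset, and $G\setminus\{0\}$ is invariant under negation, so $\Delta(D_1,D_2)=3(G\setminus\{0\})$ forces $\Delta(D_2,D_1)=3(G\setminus\{0\})$ as well. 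Thus $\{D_1,D_2\}$ is the desired GSEDF as soon as that one identity is checked.

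To keep the verification short I would first record a reduction. Since $k_1+k_2=14<16$, the set $D=D_1\cup D_2$ misses exactly two points; write $\{c_1,c_2\}=G\setminus D$. Using $D_1G=|D_1|G=5G$ together with $D_1^{-1}+D_2^{-1}+c_1^{-1}+c_2^{-1}=G$ in $\mathbb{Z}[G]$, the identity \eqref{1.2} for $i=1$, namely $D_1D_2^{-1}=3(G\setminus\{1\})$, is equivalent to
\[
D_1D_1^{-1}+D_1\left(c_1^{-1}+c_2^{-1}\right)=2G+3\cdot 1 .
\]
In words: every nonzero element of $G$ must occur exactly twice in the combined list of the internal differences of $D_1$ and of the two translates $D_1-c_1$ and $D_1-c_2$. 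This replaces the check of the $45$ cross-differences by a $30$-term count, and it shows in particular that $D_1$ must be chosen so that each of its nonzero differences has multiplicity at most two.

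Two further remarks guide and shorten the search. Neither $D_1$ nor $D_2$ has difference-set parameters (neither $5\cdot 4$ nor $9\cdot 8$ is a multiple of $15$), and $D\neq G$, $D\neq G\setminus\{0\}$, so Theorems~\ref{DS} and~\ref{PDS} offer no shortcut: the sets must be produced directly. Moreover, applying a nonprincipal character $\chi$ to \eqref{1.2} gives $\chi(D_1)\overline{\chi(D_2)}=-3$, hence $|\chi(D_1)|^2|\chi(D_2)|^2=9$; since $\sum_{\chi\neq\chi_0}|\chi(D_1)|^2=55$ and $\sum_{\chi\neq\chi_0}|\chi(D_2)|^2=63$ by Parseval, a group of exponent at most $4$ (where all character values lie in $\mathbb{Z}[i]$, so each $|\chi(D_1)|^2$ is a nonnegative integer in $\{1,3,9\}$) is impossible: $3$ is not a sum of two squares, forcing $|\chi(D_1)|^2\in\{1,9\}$, and the two Parseval sums then have no common solution. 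Hence $G$ must have exponent at least $8$, i.e. $G=\mathbb{Z}_{16}$ or $G=\mathbb{Z}_8\times\mathbb{Z}_2$. I would then search these two groups for a $5$-set $D_1$ of bounded difference-multiplicity and a pair $\{c_1,c_2\}$ satisfying the displayed identity, put $D_2=G\setminus(D_1\cup\{c_1,c_2\})$, and check. The main obstacle is precisely locating these sets — there is no structural template to fall back on, so this is an explicit (computer-assisted) search; once the sets are in hand, the reduced $30$-term identity makes the verification routine.
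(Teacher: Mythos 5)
Your preparatory reductions are all correct, but the proposal has a genuine gap: Theorem~\ref{g16} is an existence statement, and you stop exactly at the point where a witness must be produced. The negation symmetry for $m=2$, the group-ring reduction $D_1D_1^{-1}+D_1(c_1^{-1}+c_2^{-1})=2G+3\cdot 1$ (which is correct: both sides have identity coefficient $5$ and total mass $35$), and the Parseval/character argument ruling out groups of exponent at most $4$ (indeed $|\chi(D_1)|^2\in\{1,9\}$ forces five characters with value $9$ to reach $55$, but then the sum for $D_2$ would be $5+90=95\neq 63$) are all sound, and they correctly narrow the group to $\mathbb{Z}_{16}$ or $\mathbb{Z}_2\times\mathbb{Z}_8$. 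But narrowing a search space is not proving the search succeeds: you explicitly defer the decisive step (``the main obstacle is precisely locating these sets'') to an unexecuted computer search, so as written nothing about existence has been established.

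The paper closes exactly this gap by exhibiting the sets: in $G=\mathbb{Z}_{2}\times\mathbb{Z}_{8}$ take $D_1 =\{(0,0), (0,1), (0,3), (1,0), (1,4)\}$ and $D_2 =\{(0,4), (0,5), (0,7), (1,1), (1,2), (1,3), (1,5), (1,6), (1,7)\}$, and verify directly that every nonzero element occurs three times in $\Delta(D_1,D_2)$. Your framework is compatible with this example and even streamlines its verification: here the two missing points are $c_1=(0,2)$ and $c_2=(0,6)$, the $20$ nonzero internal differences of $D_1$ hit $(0,4)$, $(1,0)$, $(1,1)$, $(1,3)$, $(1,4)$, $(1,5)$, $(1,7)$ twice each and $(0,1)$, $(0,2)$, $(0,3)$, $(0,5)$, $(0,6)$, $(0,7)$ once each, while the ten differences in $(D_1-c_1)\cup(D_1-c_2)$ add one to each of the latter six and two to each of $(1,2)$, $(1,6)$, giving multiplicity exactly $2$ everywhere, i.e.\ your reduced identity holds. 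So your approach would have worked, but without an explicit pair $(D_1,D_2)$ (or some other existence argument) the theorem remains unproved; the construction itself is the entire content of the paper's proof.
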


\begin{proof}
Let $G=\mathbb{Z}_{2}\times\mathbb{Z}_{8}$, $D_1 =\{(0,0), (0,1), (0,3), (1,0), (1,4)\}$, and  $D_2 =\{(0,4), (0,5), (0,7), \\(1,1), (1,2), (1,3), (1,5), (1,6), (1,7)\}$. It is easy  to check that
$\Delta(D_1,D_2)=\Delta(D_2,D_1)= (\mathbb{Z}_{2}\times\mathbb{Z}_{8})\setminus \{(0,0)\}$. Then $\{D_1,D_2\}$ is a $(16,2;5,9;3,3)$-GSEDF in $G$.
\end{proof}

\begin{theorem}\label{q+2} Let $q$ and $q+2$ be odd prime powers and $v=q(q+2)$. Then there is a  $(v, 2;\frac{v-1}{2},\frac{v+1}{2};\frac{v+1}{4},\frac{v+1}{4})$-GSEDF.
\end{theorem}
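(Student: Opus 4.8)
The plan is to exploit that the two block sizes sum to the group order: $k_1+k_2=\frac{v-1}{2}+\frac{v+1}{2}=v$. Hence any $(v,2;\frac{v-1}{2},\frac{v+1}{2};\frac{v+1}{4},\frac{v+1}{4})$-GSEDF in a group $G$ of order $v$ is forced to be a \emph{partition} of $G$, so Theorem~\ref{DS} becomes available. By that theorem it suffices to build, in some abelian group $G$ of order $v$, one set $D_1$ that is a $(v,\frac{v-1}{2},\frac{v-1}{2}-\frac{v+1}{4})$-DS, i.e. a $(v,\frac{v-1}{2},\frac{v-3}{4})$-DS, and then set $D_2=G\setminus D_1$. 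Indeed the complement of a $(v,k,\lambda)$-DS is a $(v,v-k,v-2k+\lambda)$-DS; substituting $k=\frac{v-1}{2}$ and $\lambda=\frac{v-3}{4}$ yields a $(v,\frac{v+1}{2},\frac{v+1}{4})$-DS, which is precisely what Theorem~\ref{DS} demands of $D_2$. So the whole statement collapses to the existence of a Hadamard difference set with parameters $(v,\frac{v-1}{2},\frac{v-3}{4})$ for $v=q(q+2)$.

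Before constructing it I would record the two arithmetic facts that make the parameters legitimate. Since $q$ and $q+2$ are odd with exactly one of them $\equiv 1$ and the other $\equiv 3\pmod 4$, we always have $v=q(q+2)\equiv 3\pmod 4$, so $\frac{v-3}{4}$ is an integer; and $v+1=(q+1)^2$, so $\frac{v+1}{4}=\bigl(\frac{q+1}{2}\bigr)^2$ is an integer as well, consistent with the counting relation~\eqref{def}. I would then invoke the classical twin prime power (Stanton--Sprott) construction. Take the additive group $G=\mathbb{F}_q\times\mathbb{F}_{q+2}$ of order $v$, let $\chi_q$ and $\chi_{q+2}$ be the quadratic characters of $\mathbb{F}_q^{\ast}$ and $\mathbb{F}_{q+2}^{\ast}$, and put
$$D_1=\{(x,y):x\neq 0,\ y\neq 0,\ \chi_q(x)=\chi_{q+2}(y)\}\cup\{(x,0):x\in\mathbb{F}_q\}.$$
A direct count gives $|D_1|=\frac{q^2-1}{2}+q=\frac{v-1}{2}=k_1$.

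The main obstacle is verifying that this $D_1$ is genuinely a $(v,\frac{v-1}{2},\frac{v-3}{4})$-DS. I would do so with the character method already used in the paper: writing a nonprincipal character of $G$ as $\chi(x,y)=\psi_1(x)\psi_2(y)$ with additive characters $\psi_1,\psi_2$, it suffices to show $|\chi(D_1)|^2=k_1-\lambda_1=\frac{v+1}{4}=\bigl(\frac{q+1}{2}\bigr)^2$ in every case. This separates into the subcases where exactly one of $\psi_1,\psi_2$ is principal and where both are nonprincipal; the nonprincipal evaluations reduce to quadratic Gauss sums over $\mathbb{F}_q$ and $\mathbb{F}_{q+2}$ (of magnitudes $\sqrt{q}$ and $\sqrt{q+2}$) together with the elementary identity $\sum_{x\neq 0}\chi_q(x)=0$, and after collecting terms each case yields the common value $\frac{(q+1)^2}{4}$. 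As this is exactly the classical computation underlying the twin prime power difference sets, I would either carry out these short character-sum evaluations explicitly or simply cite their known existence, just as Paley difference sets were cited in Theorem~\ref{prime power}. Applying Theorem~\ref{DS} to the partition $\{D_1,G\setminus D_1\}$ then produces the required $(v,2;\frac{v-1}{2},\frac{v+1}{2};\frac{v+1}{4},\frac{v+1}{4})$-GSEDF.
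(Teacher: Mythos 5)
Your proposal is correct and takes essentially the same route as the paper: the paper also invokes the Stanton--Sprott twin prime power $(v,\frac{v-1}{2},\frac{v-3}{4})$-DS in $G=\mathbb{F}_q\times\mathbb{F}_{q+2}$ (citing it rather than re-deriving it by character sums), passes to the complement $G\setminus D$ to obtain the $(v,\frac{v+1}{2},\frac{v+1}{4})$-DS, and applies Theorem~\ref{DS} to the partition $\{D, G\setminus D\}$. Your additional details (the forced-partition observation, the parameter arithmetic, and the sketched Gauss-sum verification) are sound but go beyond what the paper records.
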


\begin{proof}
Since $q$ and $q+2$ are odd prime powers, there is a $(v, \frac {v-1} 2, \frac {v-3} 4)$-DS, denoted by $D$,  in $G=\mathbb{F}_{q}\times\mathbb{F}_{q+2}$ \cite{SS}.  Then $G\setminus D$ is a $(v, \frac {v+1} 2, \frac {v+1} 4)$-DS in $G$. So by Theorem \ref{DS} $\{D, G\setminus D\}$ is a $(v, 2;\frac{v-1}{2},\frac{v+1}{2};\frac{v+1}{4},\frac{v+1}{4})$-GSEDF.
\end{proof}

\begin{theorem}\label{q4=1}Let $t=p_1p_2\dots p_n$, $p_i>1$, $1\le i\le n$, where $p_1, p_2,\dots,p_n$ are odd integers.
For any prime power $q$ with $q\equiv1\pmod 4$, there exists a $(qt, 2;(q-1)2^{n-1},\frac{qt-1}{2};(q-1)2^{n-2},(q-1)2^{n-2})$-GSEDF.
 \end{theorem}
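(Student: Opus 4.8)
The plan is to prove this by induction on $n$, using Theorem \ref{power}(1) as the base case and Construction \ref{construction1} as the single inductive step. The guiding observation is that every GSEDF appearing along the way has precisely the shape $(v,2;2\lambda,\frac{v-1}{2};\lambda,\lambda)$ that Construction \ref{construction1} both consumes and produces, so the induction closes on itself and the argument reduces to bookkeeping.

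First I would record the base case. By Theorem \ref{power}(1), since $q\equiv 1\pmod 4$ is a prime power, there is a $(q,2;\frac{q-1}{2},\frac{q-1}{2};\frac{q-1}{4},\frac{q-1}{4})$-GSEDF. Writing $\lambda_0=\frac{q-1}{4}$, which is an integer exactly because $q\equiv 1\pmod 4$, this GSEDF is precisely a $(q,2;2\lambda_0,\frac{q-1}{2};\lambda_0,\lambda_0)$-GSEDF, i.e.\ the input format required by Construction \ref{construction1}. This is the $n=0$ instance of the claimed parameter formula, since $(q-1)2^{-1}=\frac{q-1}{2}$ and $(q-1)2^{-2}=\frac{q-1}{4}$ are integers.

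Next I would apply Construction \ref{construction1} successively with moduli $p_1,p_2,\dots,p_n$. Set $v_0=q$ and $v_j=q p_1\cdots p_j$ for $1\le j\le n$. Assume inductively that there is a $(v_{j-1},2;(q-1)2^{j-2},\frac{v_{j-1}-1}{2};(q-1)2^{j-3},(q-1)2^{j-3})$-GSEDF. Its first block size equals twice its common $\lambda=(q-1)2^{j-3}$, so it has the form $(v_{j-1},2;2\lambda,\frac{v_{j-1}-1}{2};\lambda,\lambda)$ demanded by the construction. The hypotheses of Construction \ref{construction1} hold: $v_{j-1}=q p_1\cdots p_{j-1}>1$ and $p_j>1$, while $v_{j-1}$ is odd (a product of the odd prime power $q$ with odd integers) and $p_j$ is odd, so $v_{j-1}\equiv p_j\equiv 1\pmod 2$. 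Applying the construction doubles the first block size and both values of $\lambda$ and multiplies the order by $p_j$, producing a $(v_j,2;(q-1)2^{j-1},\frac{v_j-1}{2};(q-1)2^{j-2},(q-1)2^{j-2})$-GSEDF. This is exactly the inductive statement at level $j$, so the induction advances; taking $j=n$ yields the desired $(qt,2;(q-1)2^{n-1},\frac{qt-1}{2};(q-1)2^{n-2},(q-1)2^{n-2})$-GSEDF.

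The hard part, if any, is purely verificational: I must make sure that the relation $k_1=2\lambda$ is genuinely preserved at every stage, so that the output of one application is a legal input to the next, and that the parity and positivity hypotheses of Construction \ref{construction1} are met throughout. Both are immediate from the arithmetic above, since doubling $k_1$ and doubling $\lambda$ keeps $k_1=2\lambda$, and every $v_{j-1}$ and every $p_j$ is odd and exceeds $1$. There is no deeper obstacle, and the base case $n=0$ coincides with Theorem \ref{power}(1), confirming the consistency of the parameter formula.
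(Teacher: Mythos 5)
Your proposal is correct and is essentially the paper's own proof: the authors likewise take the $(q,2;\frac{q-1}{2},\frac{q-1}{2};\frac{q-1}{4},\frac{q-1}{4})$-GSEDF from Theorem \ref{power}(1) as the starting point and apply Construction \ref{construction1} recursively $n$ times. Your write-up merely makes explicit the bookkeeping (preservation of the shape $k_1=2\lambda$, oddness of $v_{j-1}$ and $p_j$) that the paper leaves implicit.
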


\begin{proof}
 For any prime power  $q\equiv1\pmod 4$, there is a $(q, 2;\frac{q-1}{2},\frac{q-1}{2}; \frac{q-1}{4}, \frac{q-1}{4})$-GSEDF in $F_q$ by Lemma \ref{power} (1). Applying   Construction \ref{construction1} recursively for $n$ times, we can obtain a $(qt, 2;(q-1)2^{n-1},\frac{qt-1}{2};(q-1)2^{n-2},(q-1)2^{n-2})$-GSEDF.
\end{proof}

\begin{theorem}\label{4m-1} Let $t=1, n=0$; or $t=p_1p_2\dots p_n$, $p_i>1$, $1\le i\le n$, where $p_1, p_2,\dots,p_n$ are odd integers. If $4m-1$ is a prime power or $4m-1=q(q+2)$ where both $q$ and $q+2$ are prime powers, then
there is a $((4m-1)t,2;m\times 2^{n+1},\frac {4mt-t-1} 2;m\times 2^n,m\times 2^n)$-GSEDF.
\end{theorem}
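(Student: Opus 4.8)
The plan is to obtain the desired family from a single base case by iterating Construction \ref{construction1} once for each odd factor $p_1,\dots,p_n$. First I would dispose of the base case $t=1$, $n=0$, where the target is a $(4m-1,2;2m,2m-1;m,m)$-GSEDF. Since $4m-1\equiv 3\pmod 4$, if $4m-1$ is a prime power then Theorem \ref{power}(2) with $q=4m-1$ yields a $(4m-1,2;\frac{q-1}{2},\frac{q+1}{2};\frac{q+1}{4},\frac{q+1}{4})$-GSEDF, i.e.\ a $(4m-1,2;2m-1,2m;m,m)$-GSEDF; and if $4m-1=q(q+2)$ with $q,q+2$ prime powers, Theorem \ref{q+2} produces a GSEDF with exactly the same parameters. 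Because $\lambda_1=\lambda_2=m$, this unordered pair of blocks may be relabeled so that the block of size $2m$ is listed first, giving precisely a $(4m-1,2;2m,2m-1;m,m)$-GSEDF.

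For the recursion I would prove by induction on $j$ that there is a $(v_j,2;2\lambda_j,\frac{v_j-1}{2};\lambda_j,\lambda_j)$-GSEDF with $v_j=(4m-1)p_1\cdots p_j$ and $\lambda_j=2^jm$; the base case is the $j=0$ instance just constructed. For the step, the GSEDF at stage $j-1$ already has the input shape $(v_{j-1},2;2\lambda_{j-1},\frac{v_{j-1}-1}{2};\lambda_{j-1},\lambda_{j-1})$ demanded by Construction \ref{construction1}, so applying that construction with $t=p_j$ delivers a $(v_{j-1}p_j,2;4\lambda_{j-1},\frac{v_{j-1}p_j-1}{2};2\lambda_{j-1},2\lambda_{j-1})$-GSEDF, which is the stage-$j$ family after setting $v_j=v_{j-1}p_j$ and $\lambda_j=2\lambda_{j-1}$. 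The hypotheses of Construction \ref{construction1} hold at every step: $v_{j-1}$ is a product of odd numbers and hence odd, $p_j$ is odd, and both are greater than $1$.

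Running the induction to $j=n$ gives $v_n=(4m-1)t$ and $\lambda_n=2^nm$, so the final family has parameters $((4m-1)t,2;m\cdot 2^{n+1},\frac{4mt-t-1}{2};m\cdot 2^n,m\cdot 2^n)$, as claimed. I do not anticipate any real difficulty: the argument is essentially bookkeeping, matching the output parameters of Construction \ref{construction1} to its required input form at each stage. The only subtlety worth flagging is the relabeling in the base case, where Theorems \ref{power} and \ref{q+2} present the two blocks in the opposite order to the one Construction \ref{construction1} expects; the equality $\lambda_1=\lambda_2$ makes this swap legitimate.
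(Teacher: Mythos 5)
Your proposal is correct and follows essentially the same route as the paper: obtain the base $(4m-1,2;2m,2m-1;m,m)$-GSEDF from Theorem \ref{power}(2) or Theorem \ref{q+2}, then apply Construction \ref{construction1} once per odd factor $p_j$. The only difference is that you spell out the induction on $j$ and the harmless relabeling of the two blocks (legitimate since $\lambda_1=\lambda_2$), both of which the paper leaves implicit.
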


\begin{proof}
When $t=1, n=0$, if $4m-1$ is a prime power or $4m-1=q(q+2)$ where both $q$ and $q+2$ are prime powers, then
there is a $(4m-1,2;2m,2m-1;m,m)$-GSEDF by Lemma \ref{power} (2) or Theorem \ref{q+2} respectively. When $t=p_1p_2\dots p_n$, $p_i>1$, $1\le i\le n$, where $p_1, p_2,\dots,p_n$ are odd integers, we can use  Construction \ref{construction1} to get a  $((4m-1)t,2;m\times 2^{n+1},\frac {4mt-t-1} 2;m\times 2^n,m\times 2^n)$-GSEDF.
\end{proof}

Now we show the relationship between a $(v,2;k_1,k_2;\lambda,\lambda)$-GSEDF and  the decomposition of complete multigraphs into complete bipartite graphs.
We briefly review some definitions about graphs. For more definitions of graph theory, see \cite{BM}.

Let $\Gamma$ be a graph and $\lambda \Gamma$ be the graph obtained by assigning each edge of $\Gamma$ a multiplicity $\lambda$. An $H$-decomposition  of a graph $\Gamma$
is a partition of the edge set of $\Gamma$ into $|E(\Gamma)|/|E(H)|$ subgraphs, each of which is isomorphic to $H$. An $H$-decomposition  of a graph $\Gamma$ is said
to be {\it cyclic} if it admits an automorphism cyclically permuting all of the vertices of $\Gamma$.

\begin{theorem}\label{graph}
If there exists a $(v,2;k_1,k_2;\lambda,\lambda)$-GSEDF, then the graph $2\lambda K_v$ has a cyclic $K_{k_1,k_2}$-decomposition.
\end{theorem}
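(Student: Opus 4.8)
The plan is to realize the decomposition explicitly from the translates of a single base block. I would identify the vertex set of $K_v$ with the abelian group $G$ underlying the GSEDF $\{D_1,D_2\}$, and take as base block $B$ the complete bipartite graph on $D_1\cup D_2$ whose edges join $D_1$ to $D_2$. Since $D_1\cap D_2=\emptyset$, $|D_1|=k_1$ and $|D_2|=k_2$, the graph $B$ is a copy of $K_{k_1,k_2}$. For each $g\in G$, let $B+g$ be the translate with parts $D_1+g$ and $D_2+g$; translation by $g$ is a bijection of $G$ carrying disjoint sets to disjoint sets, so every $B+g$ is again a copy of $K_{k_1,k_2}$. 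The claim to establish is that the family $\{B+g:g\in G\}$ of $v$ blocks is a $K_{k_1,k_2}$-decomposition of $2\lambda K_v$.

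First I would pin down the arithmetic, which forces the block count. By the counting relation (\ref{def}) a $(v,2;k_1,k_2;\lambda,\lambda)$-GSEDF satisfies $k_1k_2=\lambda(v-1)$, so $|E(K_{k_1,k_2})|=\lambda(v-1)$, while $|E(2\lambda K_v)|=2\lambda\binom{v}{2}=\lambda v(v-1)$. Hence any decomposition must consist of exactly $v$ blocks, which is precisely the number of translates. So the count is automatically correct, and it remains only to verify the edge multiplicities.

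The substantive step is to show that each pair $\{a,b\}$ with $a\neq b$ is covered exactly $2\lambda$ times. Fix such a pair. The edge $\{a,b\}$ occurs in $B+g$ exactly when one of $a-g,b-g$ lies in $D_1$ and the other in $D_2$; because $D_1\cap D_2=\emptyset$, for each $g$ this happens in at most one of two mutually exclusive ways, so there is no double counting within a single block. In the first way, $a-g\in D_1$ and $b-g\in D_2$: writing $x=a-g,\ y=b-g$ gives a representation $a-b=x-y$ with $x\in D_1,\ y\in D_2$, and $g=a-x$ is then determined, so the number of such $g$ equals the multiplicity of $a-b$ in $\Delta(D_1,D_2)$. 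In the second way, $a-g\in D_2$ and $b-g\in D_1$, giving a representation of $a-b$ in $\Delta(D_2,D_1)$. Since $\{D_1,D_2\}$ is a GSEDF with $\lambda_1=\lambda_2=\lambda$, both $\Delta(D_1,D_2)$ and $\Delta(D_2,D_1)$ equal $\lambda(G\setminus\{0\})$, and as $a-b\neq 0$ each multiplicity is $\lambda$. Thus $\{a,b\}$ is covered $\lambda+\lambda=2\lambda$ times. Combined with the edge count, this shows the $v$ translates partition the edge multiset of $2\lambda K_v$.

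I expect the one genuinely load-bearing point to be the factor $2\lambda$ rather than $\lambda$: it arises precisely because an unordered edge can be oriented in two ways, one matching a difference in $\Delta(D_1,D_2)$ and one in $\Delta(D_2,D_1)$, each supplied $\lambda$ times by the GSEDF. The other point to phrase carefully is the word \emph{cyclic}: the entire construction is invariant under the regular translation action of $G$, so this group acts as automorphisms of the decomposition; when $G$ is cyclic, translation by a generator is a single $v$-cycle on the vertices and the decomposition is cyclic in the stated sense, while for general abelian $G$ one obtains the equivalent $G$-invariant (regular) decomposition. I would frame the conclusion in terms of this translation action to make the invariance transparent.
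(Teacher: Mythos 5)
Your proposal is correct and follows essentially the same route as the paper: the paper also takes the base block $(D_1;D_2)$ on $D_1\cup D_2$, forms the $v$ translates $\{(D_1+g;D_2+g):g\in G\}$, and verifies (in the same way you do) that each edge is covered $\lambda$ times via $\Delta(D_1,D_2)$ and $\lambda$ times via $\Delta(D_2,D_1)$, with disjointness of $D_1$ and $D_2$ preventing double counting within a block. Your closing remark about the word \emph{cyclic} is a genuine refinement rather than a deviation: the paper's definition requires an automorphism cyclically permuting all vertices, so for non-cyclic abelian $G$ (e.g.\ the $(16,2;5,9;3,3)$-GSEDF in $\mathbb{Z}_2\times\mathbb{Z}_8$) the construction strictly yields only a $G$-invariant decomposition, a point the paper glosses over.
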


\begin{proof}

Suppose $\{D_1, D_2\}$ is a $(v,2;k_1,k_2;\lambda,\lambda)$-GSEDF in a group $G$ of order $v$.  We use $(D_1; D_2)$ to denote the complete bipartite graph $K_{k_1,k_2}$ with vertex set $D_1\cup D_2$ and edge set $\{( d_1, d_2): d_1\in D_1, d_2\in D_2\}$. Let ${\cal H}=\{(D_1+g; D_2+g):g\in G\}$. Then it is easy to check that ${\cal H}$ is  a cyclic $K_{k_1,k_2}$-decomposition of $2\lambda K_v$.
\end{proof}

Not much result is known for cyclic $K_{a,b}$-decompositions of complete multigraphs, see \cite{CD}. By Theorems~\ref{C1}, \ref{2n}-\ref{4m-1} and \ref{graph}, we have the following results:
 \begin{description}
   \item[(1)] Let $v=ab+1$. Then $2K_{v}$ has a cyclic $K_{a,b}$-decomposition;
   \item[(2)] Let $v=p_1p_2\dots p_n$, $p_i>1$, $1\le i\le n$, and $p_1, p_2,\dots,p_n$ be odd integers. Then $2^{n}K_v$ has a cyclic $K_{2^{n},(v-1)/2}$-decomposition;
   \item[(3)]$6K_{16}$ has a cyclic $K_{5,9}$-decomposition;
   \item[(4)] Let $q$ and $q+2$ be odd prime powers and $v=q(q+2)$. Then  $\frac {v+1} 2 K_v$ has a cyclic $K_{\frac{v-1}2,\frac {v+1} 2}$-decomposition;
   \item[(5)] Let $t=p_1p_2\dots p_n$, $p_i>1$, $1\le i\le n$, where $p_1, p_2,\dots,p_n$ are odd integers.
       For any prime power $q$ with $q\equiv1\pmod 4$, then $(q-1)2^{n-1}K_{qt}$ has a cyclic $K_{(q-1)2^{n-1},\frac{qt-1}{2}}$-decomposition;
   \item[(6)] Let $t=1, n=0$; or $t=p_1p_2\dots p_n$, $p_i>1$, $1\le i\le n$, where $p_1, p_2,\dots,p_n$ are odd integers. If $4m-1$ is a prime power or $4m-1=q(q+2)$ where both $q$ and $q+2$ are prime powers, then $(m\times 2^{n+1})K_{(4m-1)t}$ has a cyclic $K_{m\times 2^{n+1},\frac {4mt-t-1} 2}$-decomposition.
 \end{description}

\section{Concluding Remarks}

In this paper, we have constructed some new GSEDFs for $m=2$. But there is still a long way to go before the existence of a  $(v,2;k_1,k_2;\lambda,\lambda)$-GSEDF can be determined completely.

\begin{remark}
For each $v\leq 21$ we have determined the existence of a $(v,2;k_1,k_2;\lambda,\lambda)$-GSEDF as below.
When $\lambda=1$, a $(v,2;k_1,k_2;\lambda, \lambda)$-GSEDF exists by Theorem \ref{C1}. When $\lambda\geq2$, all the possible  parameters $(v,k_1,k_2,\lambda)$ of a $(v,2;k_1,k_2;\lambda,\lambda)$-GSEDF are listed as follows:
$(v, k_1, k_2, \lambda)\in M_1\cup M_2\cup M_3\cup M_4\cup M_5$, where\\
$M_1= \{(21,4,10,2),\ (21,8,10,4)\}$,\\
$M_2=\{(15,4,7,2),\ (16,5,9,3)\}$,\\
$M_3= \{(13,4,9,3),\ (15,7,8,4),\ (16,6,10,4),\ (21,5,16,4)\}$,\\
$M_4= \{(7,3,4,2),\ (9,4,4,2),\ (11,5,6,3),\ (13,6,6,3), (17,8,8,4),\ (19,9,10,5)\}$,\\
$M_5= \{(10,3,6,2),\ (11,4,5,2),\ (13,3,8,2),\ (13,4,6,2),\ (15,6,7,3),\ (16,3,10,2),\ (16,5,6,2)$,

~~~$(17,4,8,2),\ (17,4,12,3),\ (17,6,8,3),\ (19,3,12,2),\ (19,4,9,2),\ (19,6,6,2),\ (19,6,9,3)$,

~~$(19,8,9,4),\ (21,5,8,2),\ (21,4,15,3),\ (21,5,12,3),\ (21,6,10,3),\ (21,10,10,5)\}$.

By Theorems \ref{DS}, \ref{power} and \ref{4m-1}, we know the GSEDFs with parameters in $M_1$, $M_3$ and $M_4$ all exist. By Theorems \ref{2n} and  \ref{g16}, the GSEDFs with parameters in $M_2$ exist. From \cite{JL}, there does not exist a GSEDF with parameters $(21,10,10,5)$. The existence of the other GSEDFs with parameters in $M_5$ is denied by a computer exhaustive search.
\end{remark}

In Section 3, we have  given some nonexistence results for GSEDFs and proved the nonexistence of a $(v,3;k_1,k_2,k_3;\lambda_1,\lambda_2,\lambda_3)$-GSEDF with $k_1+k_2+k_3<v$. For the existence of a $(v,4;k_1,k_2,k_3,k_4;\lambda_1,\lambda_2,\lambda_3,\lambda_4)$-GSEDF with $\sum\limits_{i=1}^{4}k_i< v$, we conjecture it does not exist.
\begin{remark}
  If a $(v,4;k_1,k_2,k_3,k_4;\lambda_1,\lambda_2,\lambda_3,\lambda_4)$-GSEDF exists, then by Lemma \ref{equ} we can find a set $\{\alpha_1,\alpha_2,\dots,\alpha_m\}$ such that $\alpha_1+\alpha_2+\dots+\alpha_m=1$, where $\alpha_j\in\{\alpha_j^{+},\alpha_j^{-}\}$, $j=1,2,\dots,m$. Let $J=\{j: \alpha_j=\alpha_j^+, 1\le j\le m\}$. It is easy to see when $|J|=0,~1,~m-1,~m$, the equation $\alpha_1+\alpha_2+\dots+\alpha_m=1$ is impossible. So when $m=4$, we only need to prove when $|J|=2$, the equation $\alpha_1+\alpha_2+\dots+\alpha_m=1$ is impossible. That is
 \begin{equation}\label{alpha}
  \alpha_i^{+}+\alpha_j^{+}+\alpha_k^{-}+\alpha_t^{-}=1,
  \end{equation}
   where $i$, $j$, $k$, $t$ take different values in $\{1,2,3,4\}$. However, we can not determine whether the equation \eqref{alpha} is impossible at present.
\end{remark}





\end{document}